\newtheorem{thm}{Theorem}[section]
\newtheorem{lem}{Lemma}[section]
\newtheorem{prop}{Proposition}[section]
\newtheorem{exemp}{Example}[section]
\newtheorem{rem}{Remark}[section]
\numberwithin  {equation}{section}
\newcommand{\fact}[1]{#1\mathpunct{}!}
\def\R{\mathbb{R}}
\def\N{\mathbb{N}}
\begin{document}
\title{\bf Generalized Taylor formula with integral remainder for Besov-Dunkl spaces}
\author{Chokri Abdelkefi, Safa Chabchoub and Faten Rached
 \footnote{\small This work was
completed with the support of the DGRST research project LR11ES11,
University of Tunis El Manar.}\\ \small  Department of Mathematics,
Preparatory
Institute of Engineer Studies of Tunis  \\ \small 1089 Monfleury Tunis, University of Tunis, Tunisia\\
  \small E-mail : chokri.abdelkefi@yahoo.fr \\  \small E-mail : safachabchoub@yahoo.com
  \\  \small E-mail : rached@math.jussieu.fr}%
\date{}
\maketitle
\begin{abstract}
In the present paper, we propose to prove some properties and
estimates of the integral remainder in the generalized Taylor
formula associated to the Dunkl operator on the real line and to
describe the Besov-Dunkl spaces  for which the remainder has a given
order.
\end{abstract}
{\bf \small Key-words} : {\small Dunkl operator, Dunkl transform,
Dunkl translation operators, Dunkl convolution, Besov-Dunkl spaces.}\\
{\bf \small  MSC (2010)} : {\small 44A15, 44A35, 46E30}.
\section{Introduction}
$ $

On the real line, the Dunkl operator is a differential-difference
operator introduced in 1989, by C. Dunkl in \cite{dun} and is
denoted by $\Lambda_{\alpha}$ where $\alpha$ is a real parameter
$>-\frac{1}{2}$. The operator $\Lambda_{\alpha}$ plays a major role
in the study of quantum harmonic oscillators governed by Wigner's
commutation rules (see \cite{rose}). This operator is associated
with the reflexion group $ \mathbb{Z}_{2}$ on $\mathbb{R}$ and is
given by
$$ \Lambda_\alpha (f)(x) = \frac{df}{dx} (x) + \frac{2\alpha+1}{x}
 \Big[\frac{f(x)-f(-x)}{2}\Big],\; f \in \mathcal{C}^{1}(\R).$$
The Dunkl kernel $E_{\alpha}$ related to $\Lambda_{\alpha}$ is used
to define the Dunkl transform $\mathcal{F}_{\alpha}$ which enjoys
properties similar to those of the classical Fourier transform. The
Dunkl kernel $E_{\alpha}$
 satisfies a product formula (see \cite{ro1}). This allows us to define
the Dunkl translation $\tau_{x}$, $x\in\mathbb{R}$. As a result, we
have the Dunkl convolution $\ast_\alpha$ (see next section).

The classical Taylor formula with integral remainder was extended to
the one dimensional Dunkl operator $\Lambda_{\alpha}$ in \cite{mou}.
For $k=1,2,...,$ $f \in \mathcal{E}(\mathbb{R})$ and $ a \in \R$, we
have
\begin{eqnarray*} \tau _x(f)(a) = \sum_{p=0}^{k-1} b_p(x) \Lambda_\alpha^p f(a) +  R_k(x,f)(a),\quad x \in \mathbb{\mathbb{R}}\backslash\{0\} ,\end{eqnarray*}
with $R_k(x,f)(a)$ is the integral remainder of order $k$ given by
 \begin{eqnarray*} \displaystyle R_k(x,f)(a)= \int_{-|x|}^{|x|} \Theta_{k-1} (x,y) \tau_y (\Lambda_\alpha^{k} f)(a) A_\alpha(y) dy,\end{eqnarray*}
 where $\mathcal{E}(\mathbb{R})$ is the space of infinitely
differentiable functions on $\mathbb{R}$ and $ (\Theta_{p})_{p\in
\mathbb{N}}$, $(b_p)_{p\in \mathbb{N}}$ are two sequences of
 functions constructed inductively from the function $A_\alpha$ defined on $\mathbb{R}$ by $A_\alpha(x)=
 |x|^{2\alpha+1}$ (see next section).

Our aim in this paper is to describe the Besov-Dunkl spaces for
which the integral remainder in the generalized Taylor formula has a
given order.

Let $0<\beta <1$, $1 \leq p < +\infty $, $ 1 \leq q \leq +\infty$
and $k$ a positive integer $(k=1, 2,...)$. We denote by
$L^p(\mu_\alpha)$ the space of complex-valued functions $f$,
measurable on $\mathbb{R}$ such that
$$\|f\|_{p,\alpha} = \left(\int_{ \mathbb{R}}|f(x)|^p
d\mu_\alpha(x) \right)^{1/p} < + \infty,$$ where $\mu_\alpha$ is a
weighted Lebesgue measure associated with the Dunkl operator (see
next section). There are many ways to define the Besov spaces (see
\cite{Bes,Pe}) and the Besov-Dunkl spaces (see
\cite{ab1,ab2,ab3}):\\$\bullet$ The Besov-Dunkl space of order $k$
denoted
 by $\mathcal{B}^k\mathcal{D}_{p,q}^{\beta,\alpha}$
is the subspace of functions $f$ in $\mathcal{E}(\mathbb{R})\cap
L^p(\mu_\alpha)$ such that $\Lambda_\alpha^{k-1}(f) \in
L^p(\mu_\alpha)$ and satisfying
\begin{eqnarray*} \int_0^{+\infty} \Big(\frac{\omega_{p,\alpha}^k(x,f)}{x^{\beta+k-1}}\Big)^q
 \frac{dx}{x} < +\infty  &if& \ \  q < +\infty \\
\mbox{and} \qquad
    \sup_{x > 0} \frac{\omega_{p,\alpha}^k(x,f)}{x^{\beta+k-1}} < +\infty   &if& \ \  q =
  +\infty,\end{eqnarray*}
with $\omega_{p,\alpha}^k(x,f) =\displaystyle \sup_{|y| \leq x} \|
R_{k-1}(y, f)-b_{k-1}(y)\Lambda_\alpha^{k-1}f\|_{p,\alpha},$ where
we put for $k=1$, $\Lambda_\alpha^{0}f=f$ and $R_{0}(x, f)=\tau
_x(f).$
 \\$\bullet$ Put $\mathcal{D}_{p,\alpha}^k$ the subspace of functions $ f $ in
$\mathcal{E}(\mathbb{R})\cap L^p(\mu_\alpha)$ such that
  $ \Lambda_\alpha^k (f) \in L^p(\mu_\alpha)$.
We consider the subspace $\mathcal{K}^k\mathcal{D}_{p,q}^{\beta,\alpha}$ of functions
 $f \in \mathcal{D}_{p,\alpha}^{k-1}+\mathcal{D}_{p,\alpha}^{k} $ satisfying
 \begin{eqnarray*}
 \int_0^{+\infty} \Big(\frac{K_{p,\alpha}^k(x,f)}{x^{\beta}}\Big)^q \frac{dx}{x} < +\infty & if &  q <
 +\infty\\
\mbox{and}\qquad
   \sup_{x > 0} \frac{K_{p,\alpha}^k(x,f)}{x^{\beta}} < +\infty & if &  q = +\infty,
\end{eqnarray*} where $K_{p,\alpha}^k $  is the Peetre K-functional given by
$$K_{p,\alpha}^k(x,f)= \inf_{f=f_0+f_1}\Big\{\| \Lambda_\alpha^{k-1} (f_0) \|_{p,\alpha}+ x \| \Lambda_\alpha^k (f_1)
\|_{p,\alpha},\,
 f_0 \in \mathcal{D}_{p,\alpha}^{k-1},\, f_1 \in \mathcal{D}_{p,\alpha}^k\Big\}.$$
$\bullet$
$\widetilde{\mathcal{B}}^k\mathcal{D}_{p,q}^{\beta,\alpha}$ denote
the subspace of functions $f$ in $\mathcal{E}(\mathbb{R})\cap
L^p(\mu_\alpha)$ such that $\Lambda_\alpha^{k-1}(f)\in
L^p(\mu_\alpha)$ and satisfying
\begin{eqnarray*} \int_0^{+\infty} \Big(\frac{\widetilde{\omega}_{p,\alpha}^k(x,f)}{x^{\beta+k-1}}\Big)^q
 \frac{dx}{x} < +\infty  &if& \ \  q < +\infty \\
\mbox{and} \qquad
    \sup_{x > 0} \frac{\widetilde{\omega}_{p,\alpha}^k(x,f)}{x^{\beta+k-1}} < +\infty   &if& \ \  q =
  +\infty,\end{eqnarray*}
with $\widetilde{\omega}_{p,\alpha}^k(x,f) =\displaystyle \big\|
R_{k-1}(x, f)+R_{k-1}(-x,
f)-\big(b_{k-1}(x)+b_{k-1}(-x)\big)\Lambda_\alpha^{k-1}
f\big\|_{p,\alpha},$ where we put for $k=1$,
$\Lambda_\alpha^{0}f=f$, $R_{0}(x, f)=\tau _x(f)$ and $R_{0}(-x,
f)=\tau
_{-x}(f).$\\
$\bullet$ Let $ \phi \in \mathcal{S}_\ast(\mathbb{R})$ such that
$\displaystyle\int_0^{+\infty}x^{2i}\phi(x)d\mu_\alpha(x)=0 $, for
all $i\in \{0,1,...,[\frac{k-1}{2}]\}$ where $ \mathcal{S}_\ast(
\mathbb{R})$ is the space of even Schwartz functions
 on $\mathbb{R}$ (see Example 4.2, section 4). We shall
 denote by $ \mathcal{C}_{p,q}^{k,\beta,\alpha}$ the subspace of functions $f$ in $\mathcal{E}(\mathbb{R})$
such that $\Lambda_\alpha^{2i}(f)\in L^p(\mu_\alpha),$ $0\leq i\leq
[\frac{k-1}{2}]$ and satisfying
\begin{eqnarray*} \int^{+ \infty}_0 \left(\frac{\|f \ast_\alpha
\phi_t \|_{p,\alpha}}{t^{\beta+k-1}}\right)^q \, \frac{dt}{t} < +
\infty  &if& q < +\infty
\\
\mbox{and}\qquad \sup_{t>0}\frac{\|f \ast_\alpha
\phi_t\|_{p,\alpha}}{t^{\beta+k-1}} <+\infty  & if & q =
+\infty,\end{eqnarray*} where $\phi_t$ is the dilation of $\phi$
given by $\phi_t(x)=\frac{1}{t^{2(\alpha+1)}}\phi(\frac{x}{t})$, for
all $t\in (0,+\infty)$ and $x\in\mathbb{R}$.\\

In this paper, we give some properties and estimates of the integral
remainder of order $k$ and we establish that
$$\mathcal{B}^k\mathcal{D}_{p,q}^{\beta,\alpha}=\mathcal{K}^k\mathcal{D}_{p,q}^{\beta,\alpha}
\quad\mbox{;}\quad
\widetilde{\mathcal{B}}^k\mathcal{D}_{p,q}^{\beta,\alpha}=
\mathcal{C}_{p,q}^{k,\beta,\alpha}.$$ Note that we have
$\mathcal{B}^k\mathcal{D}_{p,q}^{\beta,\alpha}\subset
\widetilde{\mathcal{B}}^k\mathcal{D}_{p,q}^{\beta,\alpha}$ (see
section 4).

The results obtained in this paper are an extension to the Dunkl
theory on the real line of those obtained in \cite{An,L.P}. More
precisely, in \cite{An}, the authors showed in the classical case
and for $k=1$ that
$\widetilde{\mathcal{B}}^k\mathcal{D}_{p,q}^{\beta,\alpha}=
\mathcal{C}_{p,q}^{k,\beta,\alpha}.$\\

The contents of this paper are as follows. \\In section 2, we
collect some basic definitions and results about harmonic analysis
associated with the Dunkl operator $\Lambda_\alpha$. \\
In section 3, we prove some properties and estimates of the integral
remainder of order $k$.\\Finally, we establish in the section 4, the
coincidence between the different characterizations of the
Besov-Dunkl spaces.

Along this paper, we use $c$ to represent a suitable positive
constant which is not necessarily the same in each occurrence.

\section{Preliminaries}

\label{sec:1} In this section, we recall some notations and results
in Dunkl theory on $\mathbb{R}$ and we refer for more details to \cite{am,dun,ro1}.\\

 For $\lambda \in \mathbb{C}$, the initial problem
$$\Lambda_\alpha(f)(x) = \lambda f(x),\quad f(0) = 1,\quad x \in \mathbb{R},$$
has a unique solution $E_\alpha(\lambda .)$ called Dunkl kernel
given by
$$E_\alpha(\lambda x) = j_\alpha(i\lambda x) + \frac{\lambda x}
{2(\alpha+1)} j_{\alpha+1} (i\lambda x),\quad x \in \mathbb{R},$$
where $j_\alpha$ is the normalized Bessel function of the first kind
and order $\alpha$, defined by
$$j_\alpha(\lambda x) = \left\{ \begin{array}{ll}
2^\alpha \Gamma (\alpha +1) \,\frac{J_\alpha(\lambda x)}{(\lambda
x)^\alpha} &\mbox{ if } \lambda x \neq 0\\
1 &\mbox{ if } \lambda x = 0 ,
\end{array}\right.$$
here $J_\alpha$ is the Bessel function of first kind and order
$\alpha$.\\ We have for all $x \in \mathbb{R}$, the function
$\lambda \rightarrow j_\alpha (\lambda x)$ is even on $\mathbb{R}$
\\and $$|E_\alpha(-i\lambda x)| \leq 1.$$ Let $A_\alpha$ the
function defined on $\mathbb{R}$ by
$$A_\alpha(x) = |x|^{2\alpha+1},\quad x \in \mathbb{R},$$ and $\mu_\alpha$ the weighted
Lebesgue measure on $\mathbb{R}$ given by
\begin{eqnarray}d\mu_\alpha(x) = \frac{A_\alpha(x)}{2^{\alpha +1}\Gamma(\alpha
+1)}dx.\end{eqnarray} For every $1 \leq p \leq + \infty$, we denote
by $L^p(\mu_\alpha)$ the space of complex-valued functions $f$,
measurable on $\mathbb{R}$ such that
$$\|f\|_{p,\alpha} = \left(\int_{ \mathbb{R}}|f(x)|^p
d\mu_\alpha(x) \right)^{1/p} < + \infty,\quad \mbox{ if } p < +
\infty,$$ and $$\|f\|_{\infty} = ess\sup_{x \in \mathbb{R}}|f(x)| <
+ \infty.$$ There exists an analogue of the classical Fourier
transform with respect to the Dunkl kernel called the Dunkl
transform and denoted by $\mathcal{F}_\alpha$. The Dunkl transform
enjoys properties similar to those of the classical Fourier
transform and is defined for $f \in L^1(\mu_\alpha)$ by
\begin{eqnarray*}
\mathcal{F}_\alpha(f)(x) = \int_{\mathbb{R}}f(y)\,E_\alpha(-ixy)\,
d\mu_\alpha(y), \quad x \in \mathbb{R}.
\end{eqnarray*}
For all $x, y, z \in \mathbb{R}$, we consider
 $$W_\alpha(x,y,z) = \frac{(\Gamma(\alpha+1)^2)}{2^{\alpha-1}
 \sqrt{\pi}\Gamma(\alpha + \frac{1}{2})}
  (1 - b_{x,y,z} + b_{z,x,y} +
 b_{z,y,x}) \Delta_\alpha(x,y,z)$$
where
$$b_{x,y,z} = \left\{ \begin{array}{ll}
\frac{x^2 + y^2 -z^2}{2xy} &\mbox{ if } x, y \in \mathbb{R}
\backslash \{0\},\; z \in \mathbb{R}\\
0 &\mbox{ otherwise }
\end{array}\right.$$
and
$$\Delta_\alpha(x,y,z) = \left\{ \begin{array}{ll}
\frac{([(|x| + |y|)^2 - z^2][z^2-(|x| - |y|)^2])^{\alpha -
\frac{1}{2}}}{|xyz|^{2\alpha}} &\mbox{ if } |z|\in S_{x,y}\\
0 &\mbox{ otherwise }
\end{array}\right.$$
where $$S_{x,y} = \Big[||x| - |y||\;,\; |x| + |y|\Big].$$ The kernel
$W_\alpha $, is even and we have
$$W_\alpha(x,,y,z) = W_\alpha(y,x,z) = W_\alpha(-x,z,y) =
W_\alpha(-z, y, -x)$$ and
$$\int_{\mathbb{R}}|W_\alpha(x,y,z)|d\mu_\alpha(z) \leq
\sqrt{2}.$$
 The Dunkl kernel $E_\alpha$ satisfies the following
product formula
$$E_\alpha(ixt) E_\alpha(iyt) = \int_{\mathbb{R}} E_\alpha(itz)
d\gamma_{x,y}(z),\quad x, y, t \in \mathbb{R}, $$ where
$\gamma_{x,y}$ is a signed measure on $\mathbb{R}$ given by
\begin{eqnarray}d\gamma_{x,y}(z) = \left\{
\begin{array}{ll} W_\alpha(x,y,z)d\mu_\alpha(z) &\mbox{ if } x, y
\in \mathbb{R} \backslash \{0\}\\ d\delta_x(z) &\mbox{ if } y = 0\\
d\delta_y(z) &\mbox{ if } x = 0.
\end{array}\right.\end{eqnarray} with
$\mbox{supp}\gamma_{x,y} = S_{x,y} \cup (-S_{x,y}).$\\ For $x, y \in
\mathbb{R}$ and $f$ a continuous function on $\mathbb{R}$, the Dunkl
translation operator $\tau_x$ given by
 $$\tau_x(f)(y) =\int_{\mathbb{R}} f(z) d\gamma_{x,y}(z)$$ satisfies the
following properties :
\begin{itemize}
\item $\tau_x$ is a continuous linear operator from
$\mathcal{E}( \mathbb{R})$ into itself.
\item For all $f \in
\mathcal{E}(\mathbb{R})$,  we have \begin{eqnarray}\tau_x(f)(y) =
\tau_y(f)(x)\quad\mbox{and}\quad \tau_0(f)(x) = f(x)\end{eqnarray}
\begin{eqnarray}\tau_x \,o\, \tau_y = \tau_y\,o\,\tau_x\quad\mbox{and}\quad
\Lambda_\alpha \,o\,\tau_x = \tau_x \,o\,\Lambda_\alpha
.\end{eqnarray}
\item For all $x \in \mathbb{R}$, the
operator $\tau_x$ extends to $L^p(\mu_\alpha),\; p \geq 1$ and we
have for $f \in L^p(\mu_\alpha)$
\begin{eqnarray}
\|\tau_x(f)\|_{p,\alpha} \leq \sqrt{2} \|f\|_{p,\alpha}.
\end{eqnarray}
\end{itemize}
The Dunkl convolution  $f\, \ast_\alpha g$ of two continuous
functions $f$ and $g$ on $\mathbb{R}$ with compact support, is
defined by
$$(f\,\ast_\alpha\, g)(x) = \int_{\mathbb{R}} \tau_x(f)(-y) g(y)
d\mu_\alpha(y),\quad x \in \mathbb{R}. $$ The convolution
$\ast_\alpha$ is associative and commutative and satisfies the
following property:
\begin{itemize}
\item Assume that $p,q, r \in [1, + \infty[$ satisfying
$\frac{1}{p} + \frac{1}{q} = 1 + \frac{1}{r}$ (the Young condition).
Then the map $(f, g) \rightarrow f\,\ast_\alpha \,g$ defined on
$C_c(\mathbb{R}) \times C_c( \mathbb{R})$, extends to a continuous
map from $L^p(\mu_\alpha) \times L^q(\mu_\alpha)$ to
$L^r(\mu_\alpha)$ and we have \begin{eqnarray}\|f\,\ast_\alpha
\,g\|_{r,\alpha} \leq \sqrt{2}\|f\|_{p,\alpha}
\|g\|_{q,\alpha}.\end{eqnarray}
\item For all $f
\in L^1(\mu_\alpha)$, $g \in L^2(\mu_\alpha)$ and $h \in
L^p(\mu_\alpha)$, $1 \leq p < +\infty,$ we have
\begin{eqnarray}  \mathcal{F}_\alpha(f\,\ast_\alpha g) =
\mathcal{F}_\alpha(f) \mathcal{F}_\alpha(g)\quad\mbox{and}\quad
 \tau_t (f\,\ast_\alpha\,h) = \tau_t(f)\,\ast_\alpha h =
f\,\ast_\alpha \tau_t(h),\; t \in \mathbb{R}.\end{eqnarray}
\end{itemize}
It has been shown in \cite{mou}, the following generalized Taylor
formula with integral remainder:\begin{prop} For $k=1,2,...,$ $f \in
\mathcal{E}(\mathbb{R})$ and $ a \in \R$, we have
\begin{eqnarray} \tau _x f(a) = \sum_{p=0}^{k-1} b_p(x) \Lambda_\alpha^p f(a) +  R_k(x,f)(a),\quad x \in \R\backslash\{0\} ,\end{eqnarray}
with $ R_k(x,f)(a)$ is the integral remainder of order $k$ given by
 \begin{eqnarray} \displaystyle R_k(x,f)(a)= \int_{-|x|}^{|x|} \Theta_{k-1} (x,y) \tau_y (\Lambda_\alpha^{k} f)(a) A_\alpha(y) dy,\end{eqnarray}
where\begin{itemize}
\item[i)]  $\displaystyle b_{2m}(x)= \frac{1}{(\alpha+1)_m \fact{m}}  \Big(\frac{x}{2} \Big)^{2m}\;$, $\;\displaystyle
  b_{2m+1}(x)= \frac{1}{(\alpha+1)_{m+1} \fact{m}}  \Big(\frac{x}{2}
  \Big)^{2m+1}$, for all $\;m\in \N.$
 \item[ii)] $ \Theta_{k-1}(x,y) = u_{k-1}(x,y) + v_{k-1}(x,y)\;$ with
   $\;\displaystyle u_0(x,y)= \frac{sgn(x)}{2 A_\alpha(x)}\,$ , $\,\displaystyle v_0(x,y)= \frac{sgn(y)}{2
 A_\alpha(y)},$\\ and
 $\quad\displaystyle u_k(x,y)= \int_{|y|}^{|x|} v_{k-1}(x,z) dz\;$ , $\;\displaystyle
 v_k(x,y)= \frac{sgn(y)}{ A_\alpha(y)}\int_{|y|}^{|x|} u_{k-1}(x,z)A_\alpha(z)
   dz.$
   \end{itemize}
   \end{prop}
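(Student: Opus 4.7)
The plan is to argue by induction on $k\geq 1$, working with the auxiliary function $g(x):=\tau_x(f)(a)$ which, by the intertwining relation (2.4) together with the symmetry (2.3), satisfies $\Lambda_\alpha^{p}g(y)=\tau_y(\Lambda_\alpha^{p}f)(a)$ for every $p\geq 0$. The formula therefore reduces to a Taylor-like expansion of $g$ at $0$, with an integral remainder over $[-|x|,|x|]$ involving $\Lambda_\alpha^{k}g$. The key structural observation, proved by a straightforward auxiliary induction from the recursive definitions of $(u_k,v_k)$, is that $u_k(x,\cdot)$ is even in $y$ and $v_k(x,\cdot)$ is odd in $y$, so that $u_{k-1}$ naturally pairs with the even part of $\Lambda_\alpha^{k}g$ and $v_{k-1}$ with its odd part.

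For the base case $k=1$, I would verify directly that
\[
\int_{-|x|}^{|x|}\!\Bigl(\tfrac{sgn(x)}{2A_\alpha(x)}+\tfrac{sgn(y)}{2A_\alpha(y)}\Bigr)\Lambda_\alpha g(y)\,A_\alpha(y)\,dy=g(x)-g(0).
\]
Writing $g=g_e+g_o$, one has $\Lambda_\alpha g(y)=g_e'(y)+g_o'(y)+\tfrac{2\alpha+1}{y}g_o(y)$, whose odd part is $g_e'(y)$ and whose even part satisfies $(\Lambda_\alpha g)_{\mathrm{even}}(y)\,A_\alpha(y)=\tfrac{d}{dy}\bigl[g_o(y)\,A_\alpha(y)\bigr]$ for $y>0$. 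The $u_0$-piece then telescopes to $g_o(x)$, while the $v_0$-piece telescopes to $g_e(x)-g(0)$, and the sum is $g(x)-g(0)$.

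For the inductive step, assuming the identity at order $k$, I would show that $R_k(x,f)(a)=b_k(x)\Lambda_\alpha^{k}f(a)+R_{k+1}(x,f)(a)$ by integrating by parts in $y$ separately on the $u_{k-1}$ and $v_{k-1}$ parts. The defining recursions $u_k(x,y)=\int_{|y|}^{|x|}v_{k-1}(x,z)\,dz$ and $v_k(x,y)=\tfrac{sgn(y)}{A_\alpha(y)}\int_{|y|}^{|x|}u_{k-1}(x,z)\,A_\alpha(z)\,dz$ are precisely the primitives needed to absorb one further application of $\Lambda_\alpha$: on the even side, $\Lambda_\alpha^{k}g\cdot A_\alpha$ is rewritten as a genuine derivative as in the base case, and integration by parts transfers this derivative onto $u_{k-1}$, producing the $v_k$-part of $\Theta_{k}$; symmetrically on the odd side, where it produces the $u_k$-part. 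The interior boundary terms at $y=\pm|x|$ vanish because $u_{k-1}(x,\pm|x|)=v_{k-1}(x,\pm|x|)=0$, while the boundary contribution collected at $y=0$ equals exactly $b_k(x)\Lambda_\alpha^{k}g(0)$; matching this with the closed forms of $b_{2m}$ and $b_{2m+1}$ in (i) is a separate induction based on $\int_{0}^{|x|}z^{2\alpha+1}\,dz=|x|^{2\alpha+2}/(2\alpha+2)$, organised by the parity of $k$.

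The main obstacle is carrying out the integration by parts through $y=0$, where $A_\alpha(y)=|y|^{2\alpha+1}$ vanishes but is not differentiable, and where the even and odd components of $\Lambda_\alpha^{k}g$ must be handled separately on $[0,|x|]$ and $[-|x|,0]$. One has to justify that $\alpha>-\tfrac{1}{2}$ is precisely enough to kill the potential boundary contribution at the origin from the weight $A_\alpha$, and then verify that the alternating parity convention (even $u_k$, odd $v_k$) is genuinely preserved at each stage, so that the two recursions do not interfere and the formula for $\Theta_k$ reassembles correctly.
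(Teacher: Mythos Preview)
The paper does not prove this proposition at all: it is quoted verbatim from Mourou \cite{mou} as a known result (see the sentence immediately preceding Proposition~2.1), so there is no in-paper argument to compare your attempt against. Your inductive scheme is nonetheless the natural one, and presumably close in spirit to Mourou's original proof; modulo one indexing slip it is correct.

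The slip is in the inductive step, where you say the boundary terms at $y=\pm|x|$ vanish because $u_{k-1}(x,\pm|x|)=v_{k-1}(x,\pm|x|)=0$. For $k=1$ this is false, since $u_0$ is a nonzero constant in $y$. What actually happens in the integration by parts is that the antiderivatives you build are $u_k$ and $v_k$, not $u_{k-1}$ and $v_{k-1}$: for $y>0$ one has
\[
\frac{d}{dy}\,u_k(x,y)=-v_{k-1}(x,y),\qquad \frac{d}{dy}\bigl[v_k(x,y)\,A_\alpha(y)\bigr]=-u_{k-1}(x,y)\,A_\alpha(y),
\]
and it is $u_k(x,|x|)=v_k(x,|x|)=0$ (immediate from their defining integrals $\int_{|y|}^{|x|}\cdots$) that kills the endpoint contribution at $\pm|x|$, for every $k\ge 1$. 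With this correction the rest of your outline goes through exactly as you describe: the $u_{k-1}$-side, paired with $h_e$, yields the $v_k$-term together with a boundary contribution at $0$ equal to $2h_e(0)\int_0^{|x|}u_{k-1}(x,z)A_\alpha(z)\,dz$; the $v_{k-1}$-side, paired with $h_o$, yields the $u_k$-term with no boundary contribution at $0$ because $h_o(y)A_\alpha(y)\to 0$ (here $\alpha>-\tfrac12$ is indeed what is needed). The resulting recursion $b_k(x)=2\int_0^{|x|}u_{k-1}(x,z)A_\alpha(z)\,dz$ then matches the closed forms in (i) by the parity-split induction you indicate.
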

   According to (\cite{ro3}, Lemma 2.2), the Dunkl operator
   $\Lambda_\alpha$ have the following regularity properties:
\begin{eqnarray} \Lambda_\alpha \;\mbox{leaves}\;\, \mathcal{C}_c^\infty(\mathbb{R})\;
 \mbox{and} \;\mbox {the\, Schwartz\, space}\; \mathcal{S}(\mathbb{R})\; \mbox{invariant}. \end{eqnarray}
\section{Some properties of the integral remainder of order $k$} In this section,
we prove some properties and estimates of the integral remainder in
the generalized Taylor formula.
\begin{rem}
 Let $k=1,2,...,$ $f \in \mathcal{E}(\mathbb{R})$ and $ x \in
 \R\backslash\{0\}$.
 \begin{enumerate}
\item From Proposition 2.1, we have
  \begin{eqnarray}
R_k(x, f)& =& \tau_x(f)- f-b_1(x)\Lambda_\alpha f...- b_{k-1}(x)\Lambda_\alpha^{k-1}f \nonumber  \\
  &=&R_{k-1}(x, f)-b_{k-1}(x)\Lambda_\alpha^{k-1} f,
  \end{eqnarray} where we put for $k=1$, $R_{0}(x, f) = \tau_x(f).$
  Observe that $$R_1(x, f)=R_{0}(x, f)-b_{0}(x)\Lambda_\alpha^{0} f=\tau_x(f)-
  f.$$
  \item According to (\cite{mou}, p.352) and Proposition 2.1, i), we have
 \begin{eqnarray}
 \displaystyle \int_{-|x|}^{|x|} |\Theta_{k-1} (x,y)|  A_\alpha(y) dy &\leq& b_k(|x|)+|x| b_{k-1}(|x|)\nonumber \\
  &\leq& c\, |x|^{k}.
\end{eqnarray}
\item Note that the function $y\longmapsto
\tau_y(f)-f$ is continuous on $\mathbb{R}$ (see \cite{mou.T}, Lemma
1, (ii)), which implies that the same is true for the function
$y\longmapsto R_k(y, f).$
 \end{enumerate}
\end{rem}
\begin{lem}
 Let $k=1,2,...,$ and $f \in \mathcal{E}(\mathbb{R})$ such that $\Lambda_\alpha^{k-1}f \in L^p(\mu_\alpha) $. Then we have
 \begin{equation}
  \| R_{k-1}(x,f)\rVert_{p,\alpha} \leq c \,|x|^{k-1} \| \Lambda_\alpha^{k-1} f \rVert_{p,\alpha},\quad x \in \R\backslash\{0\}.
 \end{equation}
 \end{lem}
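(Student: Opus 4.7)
The plan is to split into two cases according to whether $k=1$ or $k\geq 2$, since only the latter admits the integral representation from Proposition 2.1. The case $k=1$ reduces to $R_0(x,f)=\tau_x(f)$ by Remark 3.1(1), so the desired bound $\|\tau_x(f)\|_{p,\alpha}\leq c\|f\|_{p,\alpha}$ is exactly the $L^p$-continuity estimate (2.5) for the Dunkl translation, with constant $c=\sqrt{2}$.

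For $k\geq 2$, the strategy is to move the $L^p$-norm inside the integral and then use the two quantitative ingredients already collected. Starting from
\begin{equation*}
R_{k-1}(x,f)(a) \;=\; \int_{-|x|}^{|x|} \Theta_{k-2}(x,y)\,\tau_y\bigl(\Lambda_\alpha^{k-1}f\bigr)(a)\,A_\alpha(y)\,dy,
\end{equation*}
I would apply Minkowski's integral inequality (in the variable $a$) to get
\begin{equation*}
\|R_{k-1}(x,f)\|_{p,\alpha} \;\leq\; \int_{-|x|}^{|x|} |\Theta_{k-2}(x,y)|\,\|\tau_y(\Lambda_\alpha^{k-1}f)\|_{p,\alpha}\,A_\alpha(y)\,dy.
\end{equation*}
Then (2.5) applied to $\Lambda_\alpha^{k-1}f\in L^p(\mu_\alpha)$ bounds the inner norm by $\sqrt{2}\,\|\Lambda_\alpha^{k-1}f\|_{p,\alpha}$ uniformly in $y$. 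Pulling this constant out, the remaining integral is exactly the quantity estimated in Remark 3.1(2) with index shifted by one, giving $\int_{-|x|}^{|x|}|\Theta_{k-2}(x,y)|A_\alpha(y)\,dy\leq c|x|^{k-1}$. Combining these yields the claim.

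The proof is essentially mechanical once the two ingredients are in place; there is no serious obstacle beyond ensuring that the case $k=1$ is treated separately (since $\Theta_{-1}$ is not defined and Remark 3.1(2) would not apply with index $-1$) and verifying that the use of Minkowski's inequality is justified, which follows from the continuity properties recorded after Proposition 2.1 together with the fact that $\Theta_{k-2}(x,\cdot)A_\alpha$ is integrable on $[-|x|,|x|]$ by (3.2).
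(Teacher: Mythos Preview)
Your proof is correct and follows essentially the same approach as the paper's own argument: both split into the cases $k=1$ (handled directly via $R_0(x,f)=\tau_x(f)$ and (2.5)) and $k\geq 2$ (handled by Minkowski's integral inequality, the translation bound (2.5), and the estimate (3.2) with index $k-1$). Your additional remarks on why the case split is necessary and why Minkowski's inequality is justified are accurate and do not depart from the paper's method.
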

\begin{proof}
 Let $k=1,2,...,$ $f \in \mathcal{E}(\mathbb{R})$ such that $\Lambda_\alpha^{k-1}f \in L^p(\mu_\alpha) $ and $x \in
 \R\backslash\{0\}$. For $k=1$, by (2.5), it's clear that $\|R_{0}(x, f) = \tau_x(f)\|_{p,\alpha}\leq c\,\|f\|_{p,\alpha}.$
Using the Minkowski's inequality for integrals, (2.5) and (2.9), we
have for $k\geq2$
 \begin{eqnarray*}
  \| R_{k-1}(x,f)\|_{p,\alpha}  &\leq& \int_{-|x|}^{|x|}| \Theta_{k-2} (x,y)| \
   \| \tau_y ( \Lambda_\alpha^{k-1} f)\|_{p,\alpha} A_\alpha(y) dy\\
   &\leq& c \;  \|   \Lambda_\alpha^{k-1} f \|_{p,\alpha} \int_{-|x|}^{|x|} |\Theta_{k-2} (x,y)| A_\alpha(y) dy.
 \end{eqnarray*}
Using (3.2),  we deduce our result.
\end{proof}
\begin{rem} Let $k=1,2,...,$ $f \in \mathcal{E}(\mathbb{R})$ such that $\Lambda_\alpha^{k-1}f \in L^p(\mu_\alpha) $ and
$x \in \mathbb{R}\backslash\{0\}.$ Then we have by (3.1), (3.3) and
Proposition 2.1, i),
 \begin{eqnarray}
 \|R_k(x,f)\|_{p,\alpha}&=&\|R_{k-1}(x,f)+b_{k-1}(x) \Lambda_\alpha^{k-1} f\|_{p,\alpha}\nonumber\\&\leq&
\|R_{k-1}(x,f)\|_{p,\alpha}+
    \|b_{k-1}(x) \Lambda_\alpha^{k-1} f\|_{p,\alpha} \nonumber
     \\&\leq& c \,|x|^{k-1} \|\Lambda_\alpha^{k-1} f \|_{p,\alpha}.
 \end{eqnarray}
\end{rem}
\begin{lem}
For $x \in \R\backslash\{0\}$ and $p\in \N$, we have
\begin{equation}
 \int_{-|x|}^{|x|} \Theta_0 (x,y)  b_p(y)  A_\alpha(y) dy = b_{p+1}(y).
\end{equation}
\end{lem}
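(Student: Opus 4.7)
The plan is a direct calculation. The first step is to substitute the explicit form $\Theta_0(x,y) = \frac{sgn(x)}{2A_\alpha(x)} + \frac{sgn(y)}{2A_\alpha(y)}$ from Proposition 2.1(ii), so the integral splits as
\[ \frac{sgn(x)}{2A_\alpha(x)}\int_{-|x|}^{|x|} b_p(y)\,A_\alpha(y)\,dy \;+\; \frac{1}{2}\int_{-|x|}^{|x|} sgn(y)\,b_p(y)\,dy. \]

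The key observation is that, from the formulas in Proposition 2.1(i), $b_p$ has the same parity as $p$, while $A_\alpha$ is even and $sgn$ is odd. Hence when $p$ is even, the second integrand is odd and vanishes; when $p$ is odd, the first integrand is odd and vanishes. In each case only one monomial integration on $[0,|x|]$ remains.

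Next I would treat the two parities separately. For $p=2m$, substituting $b_{2m}(y)=\frac{1}{(\alpha+1)_m\,m!}(y/2)^{2m}$ and $A_\alpha(y)=|y|^{2\alpha+1}$ reduces the surviving term to $\frac{sgn(x)}{|x|^{2\alpha+1}}\cdot\frac{1}{(\alpha+1)_m\,m!\,2^{2m}}\int_0^{|x|}y^{2m+2\alpha+1}\,dy$; evaluating the integral and using the Pochhammer identity $(\alpha+1)_{m+1}=(m+\alpha+1)(\alpha+1)_m$ collapses this to $\frac{sgn(x)|x|^{2m+1}}{(\alpha+1)_{m+1}\,m!\,2^{2m+1}}$. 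For $p=2m+1$, the surviving term is $\int_0^{|x|} b_{2m+1}(y)\,dy$, where the identities $(m+1)m!=(m+1)!$ and $|x|^{2m+2}=x^{2m+2}$ yield $b_{2m+2}(x)$ directly from the definition.

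The only mildly subtle point is absorbing the factor $sgn(x)$ correctly in the even case: since $2m+1$ is odd, $sgn(x)|x|^{2m+1}=x^{2m+1}$, so the result collapses to $\frac{1}{(\alpha+1)_{m+1}\,m!}(x/2)^{2m+1}=b_{2m+1}(x)$, rather than an expression involving $|x|$. Note that the $y$ on the right-hand side of the stated identity is evidently a typographical error for $x$. Apart from this sign bookkeeping, every step is a routine application of the explicit definitions in Proposition 2.1.
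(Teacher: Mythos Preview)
Your proposal is correct and follows essentially the same approach as the paper's proof: both split $\Theta_0=u_0+v_0$, observe (you more explicitly than the paper) that parity kills one of the two resulting integrals in each case $p=2m$ and $p=2m+1$, and then evaluate the surviving monomial integral on $[0,|x|]$ to recover $b_{p+1}(x)$. Your remark that the right-hand side should read $b_{p+1}(x)$ rather than $b_{p+1}(y)$ is also correct.
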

\begin{proof}
 Let $x\in \R\backslash\{0\} $. Using Proposition 2.1, we have
 \begin{itemize}
\item  If $p=2m,$ $m\in
 \N,$
  \begin{align*}
   \int_{-|x|}^{|x|} \Theta_0 (x,y)  b_{2m}(y)  A_\alpha(y) dy
   & = \int_{-|x|}^{|x|} u_0(x,y) b_{2m}(y) A_\alpha(y) dy + \int_{-|x|}^{|x|} v_0(x,y) b_{2m}(y) A_\alpha(y) dy \\
   & = \int_{-|x|}^{|x|} \frac{sgn(x)|y|^{2\alpha+1}}{2|x|^{2\alpha+1}} b_{2m}(y) dy + \int_{-|x|}^{|x|} \frac{sgn(y)}{2} b_{2m}(y)  dy \\
   &=  \frac{x}{2^{2m}|x|^{2\alpha+2}(\alpha+1)_m \fact{m}}\int_{0}^{|x|} y^{2\alpha+2m+1} dy\\
   &= \frac{x}{2^{2m}(\alpha+1)_m \fact{m}} \frac{|x|^{2m}}{2 (\alpha +m+1)} \\
   &= b_{2m+1}(x).
  \end{align*}
\item If $p =2m+1,$ $m\in \N,$ we get
  \begin{align*}
   \int_{-|x|}^{|x|}  \Theta_0 (x,y)  b_{2m+1}(y)  A_\alpha(y) dy
   & =\int_{-|x|}^{|x|} u_0(x,y) b_{2m+1}(y) A_\alpha(y) dy + \int_{-|x|}^{|x|} v_0(x,y) b_{2m+1}(y) A_\alpha(y) dy \\\\
   & = \int_{-|x|}^{|x|} \frac{sgn(x)|y|^{2\alpha+1}}{2|x|^{2\alpha+1}} b_{2m+1}(y) dy + \int_{-|x|}^{|x|} \frac{sgn(y)}{2} b_{2m+1}(y)  dy \\
   &=  \frac{1}{2^{2m+1}(\alpha+1)_{m+1} \fact{m}}\int_{0}^{|x|} y^{2m+1} dy\\
   &= \frac{1}{2^{2m+1}(\alpha+1)_{m+1} \fact{m}} \frac{|x|^{2m+2}}{2 (m+1)} \\
   &= b_{2m+2}(x).
  \end{align*}
  \end{itemize}
Our Lemma is proved.
\end{proof}
\begin{lem}
  Let $k=1,2,...,$ $f \in \mathcal{E}(\mathbb{R})$, $x \in \R\backslash\{0\}$ and $a \in
  \R$. Then we have,
 \begin{eqnarray}
R_k(x, f)(a) =\int_{-|x|}^{|x|} \Theta_0(x,y)
R_{k-1}(y,\Lambda_\alpha f)(a) A_\alpha(y) dy.
\end{eqnarray}
\end{lem}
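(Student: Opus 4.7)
The plan is to start from the right-hand side and reduce it to the definition of $R_k(x,f)(a)$ using the Taylor formula (2.8) together with Lemma 3.1.

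First, I would apply Proposition 2.1 with $f$ replaced by $\Lambda_\alpha f$ and order $k-1$ to write, for every $y \in \R\backslash\{0\}$,
\begin{equation*}
R_{k-1}(y, \Lambda_\alpha f)(a) = \tau_y(\Lambda_\alpha f)(a) - \sum_{p=0}^{k-2} b_p(y)\, \Lambda_\alpha^{p+1} f(a),
\end{equation*}
where the sum is empty when $k=1$. Plugging this into the right-hand side of (3.6) and splitting the integral linearly, I obtain two contributions: the integral of $\Theta_0(x,y)\,\tau_y(\Lambda_\alpha f)(a)\,A_\alpha(y)$ over $[-|x|,|x|]$, and a finite sum of terms of the form $\Lambda_\alpha^{p+1}f(a)\int_{-|x|}^{|x|} \Theta_0(x,y) b_p(y) A_\alpha(y)\,dy$.

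Next, I would identify the first integral. Directly from the definition (2.9) of the integral remainder applied with $k=1$,
\begin{equation*}
\int_{-|x|}^{|x|}\Theta_0(x,y)\,\tau_y(\Lambda_\alpha f)(a)\,A_\alpha(y)\,dy = R_1(x,f)(a) = \tau_x(f)(a) - f(a),
\end{equation*}
where the last equality is Remark 3.1.1 (noting $b_0 \equiv 1$). For the sum, Lemma 3.1 gives $\int_{-|x|}^{|x|}\Theta_0(x,y) b_p(y) A_\alpha(y)\,dy = b_{p+1}(x)$, so after re-indexing $q=p+1$ the sum becomes $\sum_{q=1}^{k-1} b_q(x)\,\Lambda_\alpha^{q} f(a)$.

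Combining these two pieces yields
\begin{equation*}
\int_{-|x|}^{|x|}\Theta_0(x,y)\,R_{k-1}(y,\Lambda_\alpha f)(a)\,A_\alpha(y)\,dy = \tau_x(f)(a) - f(a) - \sum_{q=1}^{k-1} b_q(x)\,\Lambda_\alpha^{q} f(a),
\end{equation*}
and since $b_0(x)=1$ this is precisely $\tau_x(f)(a) - \sum_{p=0}^{k-1} b_p(x)\,\Lambda_\alpha^{p} f(a) = R_k(x,f)(a)$ by the generalized Taylor formula (2.8). The only delicate points are purely bookkeeping: the empty-sum convention for $k=1$ (for which the identity reduces to $R_1(x,f)=\int \Theta_0\,\tau_y(\Lambda_\alpha f)\,A_\alpha\,dy$, which is the very definition of $R_1$) and the index shift in the $b_p$ sum, both of which are matched by Lemma 3.1.
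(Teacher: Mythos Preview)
Your proof is correct. The only quibble is a labeling slip: the identity $\int_{-|x|}^{|x|}\Theta_0(x,y)\,b_p(y)\,A_\alpha(y)\,dy=b_{p+1}(x)$ that you invoke is Lemma~3.2 (equation~(3.5)) in the paper, not Lemma~3.1.

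Your route differs from the paper's in organization. The paper argues by induction on $k$: the base case is exactly your observation that $R_1(x,f)=\int\Theta_0\,\tau_y(\Lambda_\alpha f)\,A_\alpha\,dy$, and the inductive step peels off one term at a time via $R_k(y,\Lambda_\alpha f)=R_{k-1}(y,\Lambda_\alpha f)-b_{k-1}(y)\Lambda_\alpha^k f$ together with (3.5). You instead expand $R_{k-1}(y,\Lambda_\alpha f)$ in one shot using the Taylor formula (2.8), then apply (3.5) to every term of the resulting sum simultaneously. Your direct computation is simply the unrolled version of the paper's induction; both hinge on the same key identity (3.5), and neither is materially shorter or more general than the other.
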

\begin{proof}
Let $k=1,2,...,$ $f \in \mathcal{E}(\mathbb{R})$, $x \in
\R\backslash\{0\}$ and $a \in \R$.
\begin{itemize}
 \item We have from (2.8), (2.9) and the fact that $R_0(y,\Lambda_\alpha f)(a)= \tau_y(\Lambda_\alpha  f)$
$$  R_1(x,f)(a) = (\tau_x(f)-f)(a)=\int_{-|x|}^{|x|} \Theta_0(x,y) \tau_y(\Lambda_\alpha f)(a) A_\alpha(y)
dy,
$$ hence the property (3.6) is true for $k=1.$
  \item Suppose that
  $$ R_k(x,f)(a) =\int_{-|x|}^{|x|} \Theta_0(x,y) R_{k-1}(y,\Lambda_\alpha f)(a) A_\alpha(y) dy, $$
   then by (3.1) and (3.5) again, we get
 \begin{align*}
  \int_{-|x|}^{|x|} \Theta_0(x,y) R_k(y,\Lambda_\alpha f)(a) A_\alpha(y) dy &=
  \int_{-|x|}^{|x|} \Theta_0(x,y) [ R_{k-1}(y,\Lambda_\alpha f)-b_{k-1}(y)\Lambda_\alpha^{k} f ](a)A_\alpha(y) dy \\
  &= \int_{-|x|}^{|x|} \Theta_0(x,y)R_{k-1}(y,\Lambda_\alpha f)(a)A_\alpha(y) dy \\
  &- \int_{-|x|}^{|x|} \Theta_0(x,y) b_{k-1}(y)\Lambda_\alpha^{k} f(a)A_\alpha(y) dy\\
  &= R_k(x,f)(a) - \Lambda_\alpha^k f(a)\int_{-|x|}^{|x|} \Theta_0(x,y) b_{k-1}(y)A_\alpha(y) dy \\
  &= R_k(x,f)(a) - b_k(x)\Lambda_\alpha^k f(a)\\
  &= R_{k+1}(x, f)(a).
  \end{align*}
\end{itemize}Hence by induction, we deduce our result.
\end{proof}
\begin{lem}
Let $k=1,2,...,$ $f \in \mathcal{E}(\mathbb{R})$, $x \in
\R\backslash\{0\}$ and $a \in
  \R$. We denote by
 $$ \displaystyle I_1(x,f)(a) = \int_{-|x|}^{|x|} \Theta_0 (x,y)
  \tau_y (f)(a) A_\alpha(y) dy,$$
  and for  $k \geq 2$
  $$\displaystyle I_k(x,f)(a) = \int_{-|x|}^{|x|} \Theta_0 (x,y)
  I_{k-1}(y,f) (a) A_\alpha(y) dy.$$
Then, we have
\begin{eqnarray}
\Lambda_\alpha^{k+1} \big(I_k(x,f)\big)(a)&=&\Lambda_\alpha^{k}
\big(I_k(x,\Lambda_\alpha f)\big)(a),\\ \mbox{and} \qquad
\Lambda_\alpha^k I_k(x,f)(a) &=& R_k(x,f)(a).
\end{eqnarray}
   \end{lem}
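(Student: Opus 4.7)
The plan is to deduce both identities from a single clean commutation: \emph{$\Lambda_\alpha$ passes through $I_k$ in its second argument}, that is,
\begin{equation*}
\Lambda_\alpha\bigl(I_k(x,f)\bigr)(a) = I_k(x,\Lambda_\alpha f)(a).
\end{equation*}
This is the key lemma. The reason it should be true is that $I_k(x,f)$ is built by integrating $\tau_y(f)(a)$ (or iterated such quantities) against kernels depending only on $(x,y)$, while $\Lambda_\alpha$ acts on the variable $a$. Since $f\in \mathcal{E}(\mathbb{R})$, differentiation under the integral sign is permitted, and the commutation relation $\Lambda_\alpha\circ\tau_y = \tau_y\circ\Lambda_\alpha$ from (2.4) lets us move $\Lambda_\alpha$ onto $f$. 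I would prove this claim by induction on $k$: for $k=1$ it is immediate from (2.4); for $k\ge 2$ one commutes $\Lambda_\alpha$ past $\int_{-|x|}^{|x|}\Theta_0(x,y)(\cdot)A_\alpha(y)\,dy$ and then applies the inductive hypothesis to $I_{k-1}(y,f)$.

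Once this commutation is in hand, identity (3.7) is purely formal: applying $\Lambda_\alpha^{k}$ to both sides of $\Lambda_\alpha I_k(x,f) = I_k(x,\Lambda_\alpha f)$ yields
\begin{equation*}
\Lambda_\alpha^{k+1}\bigl(I_k(x,f)\bigr)(a) = \Lambda_\alpha^{k}\bigl(I_k(x,\Lambda_\alpha f)\bigr)(a).
\end{equation*}

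For (3.8) I would induct on $k$. The base case $k=1$ reads $\Lambda_\alpha I_1(x,f)(a) = R_1(x,f)(a)$: by the commutation, $\Lambda_\alpha I_1(x,f)(a) = I_1(x,\Lambda_\alpha f)(a) = \int_{-|x|}^{|x|}\Theta_0(x,y)\tau_y(\Lambda_\alpha f)(a)A_\alpha(y)dy$, which coincides with $R_1(x,f)(a)$ via Lemma 3.2 applied with $k=1$ (since $R_0(y,\Lambda_\alpha f)=\tau_y(\Lambda_\alpha f)$). For the inductive step, assuming $\Lambda_\alpha^k I_k(y,f) = R_k(y,f)$ for all $y$, I use the definition of $I_{k+1}$, move $\Lambda_\alpha^{k+1}$ inside the integral, invoke (3.7) to rewrite $\Lambda_\alpha^{k+1}I_k(y,f) = \Lambda_\alpha^{k}I_k(y,\Lambda_\alpha f)$, apply the induction hypothesis to obtain $R_k(y,\Lambda_\alpha f)(a)$ inside the integral, and finally recognise the result as $R_{k+1}(x,f)(a)$ via Lemma 3.2.

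The main obstacle is bookkeeping rather than analysis: the symbol $\Lambda_\alpha$ only acts on the variable $a$, while $x$ (and $y$) play the roles of parameters. Verifying that differentiation under the integral is legitimate is routine because $f$ (and all of its Dunkl derivatives) is smooth and $\Theta_0(x,y)A_\alpha(y)$ is an $L^1$ kernel in $y$ on $[-|x|,|x|]$; still, one must be careful to use (2.4) at each step so that $\Lambda_\alpha$ is consistently transferred from $\tau_y(f)$ to $f$. With that observation, both identities reduce to short inductive verifications that closely mirror the proof of Lemma 3.2.
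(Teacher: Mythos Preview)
Your proposal is correct and follows essentially the same route as the paper: both arguments prove the two identities by induction on $k$, passing $\Lambda_\alpha$ through the integral via differentiation under the integral sign and the commutation $\Lambda_\alpha\circ\tau_y=\tau_y\circ\Lambda_\alpha$ from (2.4), and closing the induction for (3.8) with the recursion of Lemma~3.3 (equation (3.6)). The only organizational difference is that you isolate the single clean commutation $\Lambda_\alpha\bigl(I_k(x,f)\bigr)=I_k(x,\Lambda_\alpha f)$ and then read off (3.7) by applying $\Lambda_\alpha^{k}$, whereas the paper inducts directly on the statement of (3.7) itself; the underlying computation is the same.
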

\begin{proof} Let $k=1,2,...,$ $f \in \mathcal{E}(\mathbb{R})$, $x \in
\mathbb{R}\backslash\{0\}$ and $a \in
  \mathbb{R}$.
  \begin{itemize}
  \item Using (2.4), we have \begin{eqnarray*}
\Lambda_\alpha^{2} \big(I_1(x,f)\big)(a)&= &  \int_{-|x|}^{|x|}
\Theta_0 (x,y)
  \Lambda_\alpha\tau_y (\Lambda_\alpha f)(a) A_\alpha(y) dy\\
  &=&\Lambda_\alpha\big( I_1(x,\Lambda_\alpha f)\big)(a).\end{eqnarray*}
    Suppose that $$\Lambda_\alpha^{k+1}
    \big(I_k(x,f)\big)(a)=\Lambda_\alpha^{k}\big(I_k(x,\Lambda_\alpha f)\big)(a),$$
then we have \begin{eqnarray*}\Lambda_\alpha^{k+2}
\big(I_{k+1}(x,f)\big)(a)&=& \int_{-|x|}^{|x|} \Theta_0 (x,y)
 \Lambda_\alpha \big(\Lambda_\alpha^{k+1}
    I_k(y,f)\big)(a) A_\alpha(y) dy\\&=& \int_{-|x|}^{|x|} \Theta_0 (x,y)
 \Lambda_\alpha \big(\Lambda_\alpha^{k}
    I_k(y,\Lambda_\alpha f)\big)(a) A_\alpha(y) dy\\&=& \Lambda_\alpha^{k+1}
\big(I_{k+1}(x,\Lambda_\alpha f)\big)(a),\end{eqnarray*}
 hence by induction, we obtain our result.
\item From (2.4), (2.9) and (3.6), we can write
 \begin{align*}
   \Lambda_\alpha \big(I_1(x,f)\big)(a) &=    \int_{-|x|}^{|x|} \Theta_0 (x,y) \Lambda_\alpha(\tau_y f)(a) A_\alpha(y) dy\\
&=  \int_{-|x|}^{|x|} \Theta_0 (x,y)\tau_y (\Lambda_\alpha f)(a)A_\alpha(y) dy\\
&= R_1(x,f)(a).
   \end{align*}
    Suppose that
   $$\Lambda_\alpha^k\big( I_k(x,f)\big)(a) = R_k(x,f)(a),$$
    then by (3.6) and (3.7), we have
    \begin{align*}
     \Lambda_\alpha^{k+1} \big(I_{k+1}(x,f)\big)(a)&= \int_{-|x|}^{|x|} \Theta_0 (x,y) \Lambda_\alpha^{k+1}
      \big(I_k(y,f)\big)(a)A_\alpha(y) dy\\
     &=  \int_{-|x|}^{|x|} \Theta_0 (x,y) \Lambda_\alpha^k \big(I_k(y,\Lambda_\alpha f)\big)(a)A_\alpha(y) dy\\
     &=  \int_{-|x|}^{|x|} \Theta_0 (x,y) R_k(y,\Lambda_\alpha f)(a)A_\alpha(y) dy\\
     &= R_{k+1}(x,f)(a).
    \end{align*}
\end{itemize} By induction, we deduce our result.
 \end{proof}
 \begin{rem} For $k=1,2,...,$ $f \in \mathcal{E}(\mathbb{R})$ and $x \in
\R\backslash\{0\}$, we observe from Proposition 2.1 that
 \begin{eqnarray}R_{k}(x,f)+R_{k}(-x,f)&=&\tau_x(f)+\tau_{-x}(f)-\sum_{p=0}^{k-1}
  \big(b_p(x)+b_p(-x)\big)\Lambda_\alpha^p f\nonumber
  \\&=&\tau_x(f)+\tau_{-x}(f)-2
  \sum_{i=0}^{[\frac{k-1}{2}]} b_{2i}(x)\Lambda_\alpha^{2i} f.\end{eqnarray}
\end{rem}
\section{Characterizations of Besov-Dunkl spaces of order $k$}
$ $ In this section, we establish respectively that
$\mathcal{B}^k\mathcal{D}_{p,q}^{\beta,\alpha}=\mathcal{K}^k\mathcal{D}_{p,q}^{\beta,\alpha}$
and $\widetilde{\mathcal{B}}^k\mathcal{D}_{p,q}^{\beta,\alpha}=
\mathcal{C}_{p,q}^{k,\beta,\alpha}.$ \\Before proving these results,
we begin with a useful remarks, a proposition containing sufficient
conditions and an example.
\begin{rem} For $k=1, 2,...,$ $f\in \mathcal{E}(\mathbb{R})$ such that
$\Lambda_\alpha^{k-1}(f) \in L^p(\mu_\alpha)$ and $x\in(0,+\infty),$
we can assert from (3.1) and (3.4) that
\begin{itemize}
  \item[1/] $\omega_{p,\alpha}^k(x,f)=\displaystyle\sup_{|y| \leq
x} \| R_{k}(y,f)\|_{p,\alpha}.$
\item[2/] $\widetilde{\omega}_{p,\alpha}^k(x,f) =\| R_{k}(x, f)+R_{k}(-x,
f)\|_{p,\alpha}.$
\end{itemize}
\end{rem}
\begin{prop} Let $ 1 \leq p < +\infty $, $ 1 \leq q \leq +\infty$,
$0<\beta<1$, $f\in \mathcal{E}(\mathbb{R})$ and $k=1, 2,...$. If
$\Lambda_\alpha^{k-1}(f)$ and $\Lambda_\alpha^k (f)$ are in
$L^p(\mu_\alpha)$, then
$f\in\mathcal{B}^k\mathcal{D}^{p,q}_{\beta,\alpha}.$
\end{prop}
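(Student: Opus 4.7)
The plan is to verify directly the sup/integral condition in the definition of $\mathcal{B}^k\mathcal{D}_{p,q}^{\beta,\alpha}$ by deriving two complementary estimates on $\omega_{p,\alpha}^k(x,f)$, one sharp for small $x$ and the other for large $x$. By item 1 of Remark 4.1, the modulus rewrites as $\omega_{p,\alpha}^k(x,f) = \sup_{|y|\leq x}\|R_k(y,f)\|_{p,\alpha}$, so everything reduces to controlling $\|R_k(y,f)\|_{p,\alpha}$ in two regimes.

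First, I would apply Lemma 3.1 with $k$ replaced by $k+1$, which uses precisely the hypothesis $\Lambda_\alpha^k f \in L^p(\mu_\alpha)$, to obtain
$$\|R_k(y,f)\|_{p,\alpha} \leq c\,|y|^k\,\|\Lambda_\alpha^k f\|_{p,\alpha}.$$
Second, I would invoke Remark 3.1, which requires only $\Lambda_\alpha^{k-1}f \in L^p(\mu_\alpha)$, giving
$$\|R_k(y,f)\|_{p,\alpha} \leq c\,|y|^{k-1}\,\|\Lambda_\alpha^{k-1}f\|_{p,\alpha}.$$
Combining both and taking the supremum over $|y|\leq x$ yields
$$\frac{\omega_{p,\alpha}^k(x,f)}{x^{\beta+k-1}} \leq c\,\min\bigl(x^{1-\beta}\|\Lambda_\alpha^k f\|_{p,\alpha},\; x^{-\beta}\|\Lambda_\alpha^{k-1} f\|_{p,\alpha}\bigr).$$

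To close the argument, for $q=+\infty$ the condition $0<\beta<1$ makes the first term bounded as $x\to 0^+$ and the second bounded as $x\to +\infty$, so the supremum over $x>0$ is finite. For $q<+\infty$, I would split $\int_0^{+\infty} = \int_0^{x_0}+\int_{x_0}^{+\infty}$ at the crossover value $x_0=\|\Lambda_\alpha^{k-1}f\|_{p,\alpha}/\|\Lambda_\alpha^k f\|_{p,\alpha}$ (with the obvious conventions if one of the norms vanishes), applying the first bound on $(0,x_0)$ and the second on $(x_0,+\infty)$; each contribution is a convergent power integral since $q(1-\beta)>0$ and $q\beta>0$. No genuine obstacle stands in the way — the proof is a direct synthesis of Lemma 3.1, Remark 3.1 and elementary split-integration; the only care needed is to treat $f\in L^p(\mu_\alpha)$ as part of the implicit hypothesis inherited from the definition of $\mathcal{B}^k\mathcal{D}_{p,q}^{\beta,\alpha}$.
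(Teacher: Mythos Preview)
Your proof is correct and follows essentially the same route as the paper: obtain the two bounds $\omega_{p,\alpha}^k(x,f)\leq c\,x^k\|\Lambda_\alpha^k f\|_{p,\alpha}$ and $\omega_{p,\alpha}^k(x,f)\leq c\,x^{k-1}\|\Lambda_\alpha^{k-1} f\|_{p,\alpha}$ from (3.3) and (3.4), then split the integral (the paper simply splits at $x=1$ rather than at your optimal crossover $x_0$, which makes no difference for mere finiteness). One minor correction: the estimate $\|R_k(y,f)\|_{p,\alpha}\leq c\,|y|^{k-1}\|\Lambda_\alpha^{k-1}f\|_{p,\alpha}$ is Remark~3.2 (equation (3.4)), not Remark~3.1.
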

\begin{proof} Let $ 1 \leq p < +\infty $, $ 1 \leq q \leq +\infty$,
$0<\beta<1$ and $f\in \mathcal{E}(\mathbb{R})$ such that
$\Lambda_\alpha^{k-1}(f),\,\Lambda_\alpha^k (f)$ are in
$L^p(\mu_\alpha)$ for $k=1, 2,....$
 By (3.3) and (3.4), we obtain for $x \in (0,+\infty)$
\begin{eqnarray*}\omega_{p,\alpha}^k(x,f)\leq c \,x^k \lVert \Lambda_\alpha^k f
\rVert_{p,\alpha} \quad\mbox{and}\quad \omega_{p,\alpha}^k(x,f)\leq
c \,x^{k-1} \lVert \Lambda_\alpha^{k-1} f
\rVert_{p,\alpha}.\end{eqnarray*} Then we can write,
\begin{eqnarray*}
\int_0^{+\infty}
\Big(\frac{\omega_{p,\alpha}^k(x,f)}{x^{\beta+k-1}}\Big)^q
\frac{dx}{x}\leq c \int_0^1 \Big(\frac{\| \Lambda_\alpha^k f
\|_{p,\alpha}}{x^{\beta-1}}\Big)^q \frac{dx}{x}+c \int_1^{+\infty}
\Big(\frac{\| \Lambda_\alpha^{k-1} f
\|_{p,\alpha}}{x^{\beta}}\Big)^q \frac{dx}{x}<+\infty.
\end{eqnarray*} Here when $q=+\infty$, we make the usual
modification.
\end{proof}
\begin{exemp} From (2.10) and Proposition 4.1, we can assert that the
spaces $\mathcal{C}_c^\infty(\mathbb{R})$ and
$\mathcal{S}(\mathbb{R})$ are included in
$\mathcal{B}^k\mathcal{D}^{p,q}_{\beta,\alpha}.$
\end{exemp}
\begin{rem} By the fact that $\widetilde{\omega}_{p,\alpha}^k(x,f)\leq
2\,\omega_{p,\alpha}^k(x,f)$, we have clearly
$\mathcal{B}^k\mathcal{D}_{p,q}^{\beta,\alpha}\subset
\widetilde{\mathcal{B}}^k\mathcal{D}_{p,q}^{\beta,\alpha}.$ Observe
that for $k=1$, we have $$\omega_{p,\alpha}^k(x,f) =\displaystyle
\sup_{|y| \leq x} \| \tau_y(f)- f\|_{p,\alpha}\quad\mbox{and}\quad
\widetilde{\omega}_{p,\alpha}^k(x,f) = \| \tau_x(f)+\tau_{-x}(f)-
2f\|_{p,\alpha}.$$\end{rem}
 \begin{thm}
  Let $0 <\beta <1$, $ k=1,2,...,$ $ 1 \leq p < +\infty $ and $ 1 \leq q \leq +\infty$, then
  \begin{equation*}
   \mathcal{B}^k\mathcal{D}_{p,q}^{\beta,\alpha} =
   \mathcal{K}^k\mathcal{D}_{p,q}^{\beta,\alpha}.
  \end{equation*}
  \end{thm}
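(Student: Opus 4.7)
The equivalence will be established by proving the two inclusions separately, using Remark 4.1(1) throughout to rewrite the modulus as $\omega_{p,\alpha}^k(x,f)=\sup_{|y|\le x}\|R_k(y,f)\|_{p,\alpha}$.

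For the easy inclusion $\mathcal{K}^k\mathcal{D}_{p,q}^{\beta,\alpha}\subset\mathcal{B}^k\mathcal{D}_{p,q}^{\beta,\alpha}$, I would fix an arbitrary admissible splitting $f=f_0+f_1$ with $f_0\in\mathcal{D}_{p,\alpha}^{k-1}$ and $f_1\in\mathcal{D}_{p,\alpha}^{k}$. Linearity of $R_k$ in $f$, combined with the estimate (3.4) from Remark 3.2 applied at level $k$ (to $f_0$) and at level $k+1$ (to $f_1$), yields for every $|y|\le x$
$$\|R_k(y,f)\|_{p,\alpha}\le c\,|y|^{k-1}\|\Lambda_\alpha^{k-1}f_0\|_{p,\alpha}+c\,|y|^{k}\|\Lambda_\alpha^{k}f_1\|_{p,\alpha}.$$
Taking the supremum over $|y|\le x$ and then the infimum over the decompositions produces $\omega_{p,\alpha}^k(x,f)\le c\,x^{k-1}K_{p,\alpha}^k(x,f)$, so that $\omega_{p,\alpha}^k(x,f)/x^{\beta+k-1}\le c\,K_{p,\alpha}^k(x,f)/x^{\beta}$; integrating against $dx/x$ (or passing to the supremum when $q=+\infty$) closes this direction.

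For the reverse inclusion $\mathcal{B}^k\mathcal{D}_{p,q}^{\beta,\alpha}\subset\mathcal{K}^k\mathcal{D}_{p,q}^{\beta,\alpha}$, I would construct at each scale $x>0$ a near-optimal decomposition by Dunkl regularization. Pick $\psi\in\mathcal{S}_\ast(\mathbb{R})$ even with $\int\psi\,d\mu_\alpha=1$; since $\Lambda_\alpha\psi$ is then odd, $\int\Lambda_\alpha\psi_x\,d\mu_\alpha=0$ for every $x>0$. Set $f_1^x=f\ast_\alpha\psi_x$ and $f_0^x=f-f_1^x$. Using the convolution formula, the symmetry $\tau_t(g)(-y)=\tau_{-y}(g)(t)$ from (2.3) and the commutation rules (2.4)--(2.7), one rewrites $\Lambda_\alpha^{k-1}f_0^x(t)$ as the integral $\int[\Lambda_\alpha^{k-1}f(t)-\tau_{-y}(\Lambda_\alpha^{k-1}f)(t)]\psi_x(y)\,d\mu_\alpha(y)$, and, thanks to the vanishing mean of $\Lambda_\alpha\psi_x$, rewrites $\Lambda_\alpha^k f_1^x=\Lambda_\alpha^{k-1}f\ast_\alpha\Lambda_\alpha\psi_x$ as the same kind of average against $\Lambda_\alpha\psi_x(y)$. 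With the scalings $\|\psi_x\|_{1,\alpha}\le c$ and $\|\Lambda_\alpha\psi_x\|_{1,\alpha}\le c\,x^{-1}$, Young's inequality (2.6) then gives
$$\|\Lambda_\alpha^{k-1}f_0^x\|_{p,\alpha}+x\,\|\Lambda_\alpha^{k}f_1^x\|_{p,\alpha}\le c\,\omega_{p,\alpha}^1(x,\Lambda_\alpha^{k-1}f),$$
so that $K_{p,\alpha}^k(x,f)\le c\,\omega_{p,\alpha}^1(x,\Lambda_\alpha^{k-1}f)$. To conclude, I would dominate $\omega_{p,\alpha}^1(x,\Lambda_\alpha^{k-1}f)$ by $c\,\omega_{p,\alpha}^k(x,f)/x^{k-1}$ by means of the identity $\Lambda_\alpha^k I_k(y,f)=R_k(y,f)$ from Lemma 3.5 together with the iterated recursion (3.6) of Lemma 3.4, which express the first-order difference $\tau_y(\Lambda_\alpha^{k-1}f)-\Lambda_\alpha^{k-1}f$ as a $\Theta_0$-weighted integral governed by $R_k$; the kernel $\Theta_0$ absorbs the missing powers of $y$ via (3.2).

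The main obstacle lies in this last step. The naive expansion $R_1(y,\Lambda_\alpha^{k-1}f)=\Lambda_\alpha^{k-1}R_k(y,f)+\sum_{p=1}^{k-1}b_p(y)\Lambda_\alpha^{k-1+p}f$ involves derivatives of order strictly larger than $k-1$, which are not a priori in $L^p(\mu_\alpha)$ under our hypotheses; hence the estimate cannot be carried out termwise, and one must instead keep those higher-order derivatives bundled inside the integral operators $I_k$ of Lemma 3.5 so that only $R_k$ and $\Theta_0$-kernels appear at the end. This is precisely where the apparatus of Section 3 is needed.
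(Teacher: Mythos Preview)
Your treatment of the inclusion $\mathcal{K}^k\mathcal{D}_{p,q}^{\beta,\alpha}\subset\mathcal{B}^k\mathcal{D}_{p,q}^{\beta,\alpha}$ is fine and matches the paper's argument.

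The reverse inclusion, however, contains a genuine gap. Your plan hinges on the inequality
\[
\omega_{p,\alpha}^1\bigl(x,\Lambda_\alpha^{k-1}f\bigr)\;\le\; c\,\frac{\omega_{p,\alpha}^k(x,f)}{x^{k-1}},
\]
and you propose to derive it from the recursion (3.6) of Lemma~3.3. But (3.6) goes in the \emph{opposite} direction: it expresses $R_k(x,f)$ as a $\Theta_0$-integral of $R_{k-1}(\cdot,\Lambda_\alpha f)$, and after $k-1$ iterations one obtains $R_k(x,f)$ as a nested integral of $R_1(\cdot,\Lambda_\alpha^{k-1}f)$. Together with (3.2) this yields $\omega_{p,\alpha}^k(x,f)\le c\,x^{k-1}\omega_{p,\alpha}^1(x,\Lambda_\alpha^{k-1}f)$, which is the reverse of what you need. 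There is no mechanism in Section~3 for inverting this recursion without bringing in the higher derivatives $\Lambda_\alpha^{k},\dots,\Lambda_\alpha^{2k-2}$ that you correctly identify as forbidden. Your own ``main obstacle'' paragraph diagnoses the difficulty but does not resolve it; the appeal to $I_k$ at that point is circular, since the only way $I_k$ helps is by furnishing the decomposition directly, which makes the mollification step superfluous.

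The paper bypasses this entirely. For each $x>0$ it sets $f_1=\dfrac{1}{b_k(x)}\,I_k(x,f)$ and $f_0=f-f_1$. Lemma~3.4 gives $\Lambda_\alpha^k f_1=\dfrac{1}{b_k(x)}R_k(x,f)$, so $x\|\Lambda_\alpha^k f_1\|_{p,\alpha}\le c\,x^{1-k}\omega_{p,\alpha}^k(x,f)$ immediately. For $f_0$, identity (3.5) together with (3.8) yields
\[
\Lambda_\alpha^{k-1}f_0=-\frac{1}{b_k(x)}\int_{-x}^{x}\Theta_0(x,y)\,R_k(y,f)\,A_\alpha(y)\,dy,
\]
and Minkowski plus (3.2) gives $\|\Lambda_\alpha^{k-1}f_0\|_{p,\alpha}\le c\,x^{1-k}\omega_{p,\alpha}^k(x,f)$. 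No intermediate modulus $\omega_{p,\alpha}^1(\cdot,\Lambda_\alpha^{k-1}f)$ is ever needed. The point is that $I_k(x,f)$ is \emph{designed} so that applying $\Lambda_\alpha^{k-1}$ or $\Lambda_\alpha^{k}$ lands directly on $R_k$, which is exactly the quantity controlled by $\omega_{p,\alpha}^k$.
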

\begin{proof}
   We start with the proof of the inclusion
   $\mathcal{K}^k\mathcal{D}_{p,q}^{\beta,\alpha}\subset\mathcal{B}^k\mathcal{D}_{p,q}^{\beta,\alpha}$.
    For $ f \in \mathcal{K}^k\mathcal{D}_{p,q}^{\beta,\alpha}$, $f=f_0+f_1$, $f_0 \in \mathcal{D}_{p,\alpha}^{k-1}$
     and $f_1 \in \mathcal{D}_{p,\alpha}^k$,
    we have by (3.3)
 \begin{eqnarray}
   \omega_{p,\alpha}^k(x,f_1) &=&  \sup_{|y| \leq x} \| R_k(y,f_1)\|_{p,\alpha}\nonumber\\
   &\leq& c \  x^k \| \Lambda_\alpha^k f_1 \|_{p,\alpha},\quad  x
   \in (0,+\infty).
 \end{eqnarray}
Using (3.4), we obtain
  \begin{eqnarray}
   \omega_{p,\alpha}^k(x,f_0) &\leq& \sup_{|y| \leq x} \| R_{k-1}(y,f_0)\|_{p,\alpha}+
    \sup_{|y| \leq x} \| b_{k-1}(y) \Lambda_\alpha^{k-1} f_0  \|_{p,\alpha} \nonumber \\
   &\leq& c \  x^{k-1} \| \Lambda_\alpha^{k-1} f_0 \|_{p,\alpha}, \quad  x
   \in (0,+\infty).
  \end{eqnarray}
Hence by (4.1) et (4.2), we deduce that
\begin{equation*}
 \omega_{p,\alpha}^k(x,f) \leq c \  x^{k-1} K_{p,\alpha}^k(x,f),
\end{equation*}
then, $ f \in \mathcal{B}^k\mathcal{D}_{p,q}^{\beta,\alpha}.$\\
 \indent Let prove now the inclusion $\mathcal{B}^k\mathcal{D}_{p,q}^{\beta,\alpha}
  \subset \mathcal{K}^k\mathcal{D}_{p,q}^{\beta,\alpha}$. For $ f \in \mathcal{B}^k\mathcal{D}_{p,q}^{\beta,\alpha}$, we
  take for $ x\in (0,+\infty)$
  $$ f_1 = \frac{1}{b_k(x)}  \;  I_k(x,f). $$
  Using (3.8), we obtain
\begin{eqnarray}
  x \lVert \Lambda_\alpha^k f_1 \rVert_{p,\alpha} &\leq& \ x \; |b_k(x)|^{-1}\omega_{p,\alpha}^k(x,f)\nonumber\\
&\leq& c \; \frac{\omega_{p,\alpha}^k(x,f)}{x^{k-1}}.
  \end{eqnarray}
On the other hand, put $f_0= f-f_1 $, we can write using (3.5)
$$ f_0 = -\frac{1}{b_k(x)}  \int_{-x}^{x} \Theta_0 (x,y)
 \big(I_{k-1}(y,f) - b_{k-1}(y) f \big) A_\alpha(y) dy.$$
  From (3.1) and (3.8), we obtain
  $$ \Lambda_\alpha^{k-1} f_0 = -\frac{1}{b_k(x)} \int_{-x}^{x} \Theta_0 (x,y)R_k(y,f)A_\alpha(y) dy.$$
By Minkowski's inequality for integrals and (3.2), we get
\begin{eqnarray}
  \lVert \Lambda_\alpha^{k-1} f_0 \rVert_{p,\alpha}
&\leq & |b_k(x)|^{-1} \int_{-x}^{x} |\Theta_0 (x,y)| \ \lVert R_k(y,f) \rVert_{p,\alpha} A_\alpha(y) dy \nonumber \\
  & \leq & c \; x^{-k} \omega_{p,\alpha}^k(x,f)\int_{-x}^{x} |\Theta_0 (x,y)| \ A_\alpha(y) dy \nonumber \\
 &\leq & c \; \frac{\omega_{p,\alpha}^k(x,f)}{x^{k-1}}.
\end{eqnarray}
By (4.3) et (4.4), we deduce that
\begin{equation*}
 K_{p,\alpha}^k(x,f) \leq c\
 \frac{\omega_{p,\alpha}^k(x,f)}{x^{k-1}},
\end{equation*}
then, $f \in \mathcal{K}^k\mathcal{D}_{p,q}^{\beta,\alpha}$. Our
theorem is proved.
 \end{proof}

In order to establish that
$\widetilde{\mathcal{B}}^k\mathcal{D}_{p,q}^{\beta,\alpha}=
\mathcal{C}_{p,q}^{k,\beta,\alpha}$, we need to prove some useful
lemmas.
 \begin{lem}
Let $k=1,2,...,$ $1\leq p< +\infty$, $\phi \in
\mathcal{S}_\ast(\mathbb{R})$ such that
$\displaystyle\int_0^{+\infty}x^{2i}\phi(x)d\mu_\alpha(x)=0 $, for
all $i\in \{0,1,...,[\frac{k-1}{2}]\}$ and $r>0$, then there exists
a constant $c>0$ such that for all $f\in \mathcal{E}(\mathbb{R})\cap
L^p(\mu_\alpha) $ satisfying $\Lambda_\alpha^{k-1} f \in
L^p(\mu_\alpha) $ and $t>0$, we have
\begin{eqnarray} \|\phi_t \ast_\alpha f\|_{p,\alpha} \leq
c \int_0^{+\infty}\min\Big\{\Big(\frac{x}{t}\Big)^{2(\alpha+1)},
\Big(\frac{t}{x}\Big)^{r}\Big\}\,\|R_{k}(x,f)+R_{k}(-x,f)\|_{p,\alpha}\frac{dx}{x}.\end{eqnarray}
\end{lem}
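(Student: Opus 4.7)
The plan is to reduce $\phi_t \ast_\alpha f$ to a single integral over $(0,+\infty)$ whose integrand involves the symmetric remainder $R_k(y,f)+R_k(-y,f)$, and then perform a pointwise size estimate on the scaling kernel $\phi_t$.

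First, using commutativity of $\ast_\alpha$ and the symmetry $\tau_a(f)(-y)=\tau_{-y}(f)(a)$ from (2.3), I would write
$$
(\phi_t \ast_\alpha f)(a) = \int_{\mathbb{R}} \tau_{-y}(f)(a)\,\phi_t(y)\,d\mu_\alpha(y).
$$
Splitting the integral on $\{y>0\}\cup\{y<0\}$, applying $y\mapsto-y$ to the negative piece, and invoking the evenness of $\phi_t$ (inherited from $\phi\in\mathcal{S}_\ast(\mathbb{R})$) together with that of $d\mu_\alpha$, the identity collapses to
$$
(\phi_t \ast_\alpha f)(a) = \int_0^{+\infty}\bigl[\tau_y(f)(a)+\tau_{-y}(f)(a)\bigr]\phi_t(y)\,d\mu_\alpha(y).
$$

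Second, I would apply (3.9) of Remark 3.3 to replace $\tau_y(f)+\tau_{-y}(f)$ by its polynomial part $2\sum_{i=0}^{[\frac{k-1}{2}]} b_{2i}(y)\Lambda_\alpha^{2i}f$ plus the remainder $R_k(y,f)+R_k(-y,f)$. Because Proposition 2.1(i) gives $b_{2i}(y)=c_i\, y^{2i}$, the polynomial contribution reduces to a linear combination of $\int_0^{+\infty}y^{2i}\phi_t(y)\,d\mu_\alpha(y)$. A change of variables $u=y/t$ together with the homogeneity of $d\mu_\alpha$ shows that these moments equal $t^{2i}\int_0^{+\infty}u^{2i}\phi(u)\,d\mu_\alpha(u)$, which vanish by the hypothesis on $\phi$. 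What remains is
$$
(\phi_t \ast_\alpha f)(a)=\int_0^{+\infty}\phi_t(y)\bigl[R_k(y,f)(a)+R_k(-y,f)(a)\bigr]d\mu_\alpha(y).
$$

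Third, Minkowski's inequality for integrals applied in the $a$-variable gives
$$
\|\phi_t\ast_\alpha f\|_{p,\alpha}\leq\int_0^{+\infty}|\phi_t(y)|\,\|R_k(y,f)+R_k(-y,f)\|_{p,\alpha}\,d\mu_\alpha(y),
$$
so it only remains to match the kernel. A direct computation produces
$$
|\phi_t(y)|\,d\mu_\alpha(y)=\frac{(y/t)^{2(\alpha+1)}|\phi(y/t)|}{2^{\alpha+1}\Gamma(\alpha+1)}\,\frac{dy}{y}.
$$
When $y\leq t$ the numerator is controlled by $c(y/t)^{2(\alpha+1)}$ via $\|\phi\|_\infty<\infty$; when $y\geq t$ the Schwartz bound $|\phi(u)|\leq C_M u^{-M}$ with $M=r+2(\alpha+1)$ delivers the bound $c(t/y)^r$. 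Taking the minimum yields the desired estimate (4.5). The only mildly technical point is justifying Minkowski, which follows from the growth estimate $\|R_k(y,f)\|_{p,\alpha}\leq c|y|^{k-1}\|\Lambda_\alpha^{k-1}f\|_{p,\alpha}$ of (3.4) paired with the rapid decay of $\phi_t$; the main conceptual step is the symmetrization performed in the first paragraph, which is what converts a convolution against an even kernel into an integral that the moment-vanishing hypothesis can act on cleanly.
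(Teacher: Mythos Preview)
Your proposal is correct and follows essentially the same route as the paper: both arguments symmetrize the convolution against the even kernel $\phi_t$ to obtain $(\phi_t\ast_\alpha f)(a)=\int_0^{+\infty}\phi_t(y)[R_k(y,f)(a)+R_k(-y,f)(a)]\,d\mu_\alpha(y)$ after killing the polynomial part via the moment hypotheses, then apply Minkowski and the pointwise bound $(y/t)^{2(\alpha+1)}|\phi(y/t)|\leq c\min\{(y/t)^{2(\alpha+1)},(t/y)^r\}$ coming from boundedness and Schwartz decay of $\phi$. The only cosmetic difference is that the paper derives the two kernel bounds as separate global inequalities (4.8) and (4.9) and then takes the minimum, whereas you split into $y\leq t$ and $y\geq t$ directly.
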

\begin{proof} Let $t>0$, we have for $i\in \{0,1,...,[\frac{k-1}{2}]\}$,
  \begin{eqnarray}\int_0^{+\infty}x^{2i}\phi(x)d\mu_\alpha(x)=0\;
  \Longrightarrow\int_0^{+\infty}x^{2i}\phi_t(x)d\mu_\alpha(x)=0, \end{eqnarray}
  and \begin{eqnarray*}(\phi_t \ast_\alpha
  f)(y)&=&\int_{\mathbb{R}}\phi_t(x)\tau_y(f)(-x)d\mu_\alpha(x)\\&=&\int_{\mathbb{R}}\phi_t(x)\tau_y(f)(
  x)d\mu_\alpha(x).\end{eqnarray*} Then using (2.3), (3.9), (4.6) and Proposition 2.1,
   we can write for $y\in\mathbb{R}$ \begin{eqnarray*}2(\phi_t \ast_\alpha
  f)(y)&=&\int_{\mathbb{R}}\phi_t(x)\Big(\tau_y(f)(x)+\tau_y(f)(-x)-2\sum_{i=0}^{[\frac{k-1}{2}]} b_{2i}(x)
  \Lambda_\alpha^{2i} f(y)
    \Big)d\mu_\alpha(x)\\&=&
  2\int_0^{+\infty}\phi_t(x)\Big(\tau_x(f)(y)+\tau _{-x}(f)(y)-2\sum_{i=0}^{[\frac{k-1}{2}]}
   b_{2i}(x)\Lambda_\alpha^{2i} f(y)\Big)d\mu_\alpha(x)\\&=&2\int_0^{+\infty}\phi_t(x)
   \big(R_{k}(x,f)(y)+R_{k}(-x,f)(y)\big)d\mu_\alpha(x).\end{eqnarray*}
By Minkowski's inequality for integrals, we obtain\begin{eqnarray}
\;\|\phi_t \ast_\alpha f\|_{p,\alpha}&\leq&
 \int_0^{+\infty}|\phi_t(x)|\;\|R_{k}(x,f)+R_{k}(-x,f)
 \|_{p,\alpha}d\mu_\alpha(x)\nonumber\\&\leq&
 c\int_0^{+\infty}\Big(\frac{x}{t}\Big)^{2(\alpha+1)}
 \Big|\phi\Big(\frac{x}{t}\Big) \Big|\;\|R_{k}(x,f)+R_{k}(-x,f)
 \|_{p,\alpha}\frac{dx}{x}\\&\leq&
 c\int_0^{+\infty}\Big(\frac{x}{t}\Big)^{2(\alpha+1)} \|R_{k}(x,f)+R_{k}(-x,f)\|_{p,\alpha}\frac{dx}{x}. \end{eqnarray}
 On the other hand, since $\phi \in  \mathcal{S}_\ast( \mathbb{R})$, then from (4.7)
 and for $r>0$ there exists a constant $c$ such that
 \begin{eqnarray} \|\phi_t \ast_\alpha f\|_{p,\alpha}\leq
 c\int_0^{+\infty}\Big(\frac{t}{x}\Big)^{r} \;\|R_{k}(x,f)+R_{k}(-x,f)\|_{p,\alpha}\frac{dx}{x}
  \;.\end{eqnarray}
 Using (4.8) and (4.9), we deduce our result.
\end{proof}
\begin{exemp} According to (\cite{ro2}, Example 3.3,(2)), the
generalized Hermite polynomials on $\mathbb{R}$, denoted by
$H_n^{\alpha+\frac{1}{2}}$, $n\in \mathbb{N}$ are orthogonal with
respect to the measure $e^{-x^2}d\mu_\alpha(x)$ and can be written
as
$$H_{2n}^{\alpha+\frac{1}{2}}(x)=(-1)^n 2^{2n}n!\,L_n^\alpha(x^2)
\quad\mbox{and}\quad H_{2n+1}^{\alpha+\frac{1}{2}}(x)=(-1)^n
2^{2n+1}n!\,xL_n^{\alpha+1}(x^2),$$ where the $L_n^\alpha$ are the
Laguerre polynomials of index $\alpha\geq -\frac{1}{2}$, given by
$$L_n^\alpha(x)=\frac{1}{n!}\, x^{-\alpha}\,e^x
\frac{d^n}{dx^n}\Big(x^{n+\alpha}e^{-x}\Big).$$ For $k=1,2,...,$ fix
any positive integer $n_0>[\frac{k-1}{2}]$ and take for example the
function defined on $\mathbb{R}$ by $\phi(x)=H_{2
n_0}^{\alpha+\frac{1}{2}}(x)\,e^{-x^2}$. Put $P_i(x)=x^{2i}$ for
$i\in \{0,1,...,[\frac{k-1}{2}]\}$, since $P_i\in$
$span_\mathbb{R}\,\{H_p^{\alpha+\frac{1}{2}},
p=0,1,...,2[\frac{k-1}{2}]\}$, then we can assert that $ \phi \in
\mathcal{S}_\ast(\mathbb{R})$ and satisfy
$\displaystyle\int_0^{+\infty}x^{2i}\,\phi(x)\,d\mu_\alpha(x)=0 .$
\end{exemp}
\begin{lem} Let $k=1,2,...,$ $1<p< +\infty$ and $ \phi \in \mathcal{S}_\ast(\mathbb{R})$ such that
$\displaystyle\int_0^{+\infty}x^{2i}\phi(x)d\mu_\alpha(x)=0 $, for
all $i\in \{0,1,...,[\frac{k-1}{2}]\}$, then there exists a constant
$c>0$ such that for all $f\in \mathcal{E}(\mathbb{R})$ satisfying
$\Lambda_\alpha^{2i} f \in L^p(\mu_\alpha),$ $0\leq i \leq
[\frac{k-1}{2}]$ and $x>0$, we have
\begin{eqnarray}\|R_{k}(x,f)+
R_{k}(-x,f)\|_{p,\alpha} \leq c \int_0^{+\infty}
\min\Big\{\Big(\frac{x}{t}\Big)^{k-1},\Big(\frac{x}{t}\Big)^{k}\Big\}\|\phi_t
\ast_\alpha f\|_{p,\alpha}
\frac{dt}{t}\;.
\end{eqnarray}
\end{lem}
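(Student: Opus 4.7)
The strategy is dual to the proof of Lemma 4.1: invert the relation there via a Calderón-type reproducing formula. The idea is to write $f$ as a continuous superposition of the pieces $\phi_t \ast_\alpha \psi_t \ast_\alpha f$ across scales $t>0$, then apply the linear operator $g\longmapsto R_k(x,g)+R_k(-x,g)$ and estimate its norm on each piece using the a priori bounds for $R_k$ already obtained in Section 3. Concretely, I would construct an auxiliary function $\psi \in \mathcal{S}_\ast(\mathbb{R})$, also with at least $[(k-1)/2]+1$ vanishing even moments, such that
$$ f = \int_0^{+\infty} \phi_t \ast_\alpha \psi_t \ast_\alpha f\, \frac{dt}{t} $$
converges in $L^p(\mu_\alpha)$. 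On the Dunkl-transform side this amounts to arranging $\int_0^{+\infty}\mathcal{F}_\alpha(\phi)(t\xi)\mathcal{F}_\alpha(\psi)(t\xi)\,\frac{dt}{t}=1$ for all $\xi\neq 0$, which is possible since $\mathcal{F}_\alpha(\phi)$ vanishes to high order at the origin by the moment assumption. Applying Minkowski's inequality for integrals then gives
$$ \|R_k(x,f)+R_k(-x,f)\|_{p,\alpha} \leq \int_0^{+\infty} \|R_k(x,g_t)+R_k(-x,g_t)\|_{p,\alpha}\, \frac{dt}{t}, $$
where $g_t = \psi_t \ast_\alpha \phi_t \ast_\alpha f$.

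The next step is to estimate the integrand in two complementary ways. First, by (3.4),
$$\|R_k(\pm x, g_t)\|_{p,\alpha} \leq c\,x^{k-1}\|\Lambda_\alpha^{k-1} g_t\|_{p,\alpha}.$$
Second, writing $R_k$ via its integral representation from Proposition 2.1 (at order $k$), using (3.2) and the $L^p$-boundedness of $\tau_y$ from (2.5), I get
$$\|R_k(\pm x, g_t)\|_{p,\alpha} \leq c\,x^{k}\|\Lambda_\alpha^{k} g_t\|_{p,\alpha}.$$
Now $\Lambda_\alpha$ commutes with $\ast_\alpha$ (by the Fourier characterization coming from (2.7)), and the dilation satisfies $\Lambda_\alpha^j \psi_t = t^{-j}(\Lambda_\alpha^j\psi)_t$ with $\|(\Lambda_\alpha^j\psi)_t\|_{1,\alpha}=\|\Lambda_\alpha^j\psi\|_{1,\alpha}$ by the change of variable $u=y/t$. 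Young's inequality (2.6) therefore gives
$$\|\Lambda_\alpha^j g_t\|_{p,\alpha} \leq c\,t^{-j}\|\phi_t \ast_\alpha f\|_{p,\alpha}, \qquad j \in \{k-1,k\}.$$
Taking $j=k-1$ in the first estimate and $j=k$ in the second yields
$$\|R_k(x,g_t)+R_k(-x,g_t)\|_{p,\alpha}\leq c\,\min\Big\{\Big(\frac{x}{t}\Big)^{k-1},\Big(\frac{x}{t}\Big)^{k}\Big\}\|\phi_t\ast_\alpha f\|_{p,\alpha},$$
and substituting into the Minkowski bound gives exactly (4.10).

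The main obstacle is the first step: rigorously setting up the reproducing formula in the Dunkl framework (producing $\psi\in\mathcal{S}_\ast(\mathbb{R})$ with enough vanishing moments, and verifying convergence of the representation in $L^p(\mu_\alpha)$ for $f$ in the stated class), together with the Fubini-type interchange needed to commute $R_k(\pm x,\cdot)$ with the outer integral. The restriction $1<p<+\infty$ is natural here because it gives access to $L^p$-multiplier theory for the Dunkl transform. Once that infrastructure is in place, the remaining estimates in Steps 2--3 are routine applications of the material of Section 3 together with Young's inequality.
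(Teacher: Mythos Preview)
Your proposal is essentially the paper's approach: Calder\'on reproducing formula, Minkowski, Young (2.6), and the bounds (3.3)--(3.4) combined with scaling to produce the factor $\min\{(x/t)^{k-1},(x/t)^k\}$. The paper streamlines the step you flag as the main obstacle: it takes $\psi=\phi$ (so nothing needs to be constructed and the moment conditions on $\phi\ast_\alpha\phi$ are automatic) and invokes the already-established Dunkl Calder\'on formula from \cite{mou.T}, working with the truncations $f_{\varepsilon,\delta}=\int_\varepsilon^\delta \phi_t\ast_\alpha\phi_t\ast_\alpha f\,\frac{dt}{t}$, moving $R_k(x,\cdot)+R_k(-x,\cdot)$ onto the kernel $\phi_t$ via (2.7), reducing $\|R_k(x,\phi_t)+R_k(-x,\phi_t)\|_{1,\alpha}$ to $\|R_k(x/t,\phi)+R_k(-x/t,\phi)\|_{1,\alpha}$ by an explicit change of variables in $d\gamma_{x,y}$, and finally passing to the limit $\varepsilon\to0$, $\delta\to\infty$ in $L^p(\mu_\alpha)$; this last limit is exactly where $1<p<\infty$ is used.
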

\begin{proof} Put for
$0<\varepsilon<\delta<+\infty$
$$
f_{\varepsilon,\delta}(y)=\int_\varepsilon^\delta
 (\phi_t \ast_\alpha \phi_t \ast_\alpha f)(y)\frac{dt}{t}\;,\;\;\;y\in
 \mathbb{R}.$$
 Then for $i \in \mathbb{N}$, we have
$$
\Lambda_{\alpha}^{2i}f_{\varepsilon,\delta}(y)=\int_\varepsilon^\delta
 (\Lambda_{\alpha}^{2i}\phi_t \ast_\alpha  \phi_t \ast_\alpha f)(y)\frac{dt}{t}\;,\;\;\;y\in \mathbb{R}.$$
From the integral representation of $\tau_x,$ we obtain by
interchanging the orders of integration and (2.7),
   \begin{eqnarray*}\tau_x(f_{\varepsilon,\delta})(y)&=&\int_\varepsilon^\delta
 \tau_x(\phi_t \ast_\alpha \phi_t \ast_\alpha f)(y)\frac{dt}{t}\\ &=&\int_\varepsilon^\delta
 (\tau_x (\phi_t) \ast_\alpha \phi_t \ast_\alpha f)(y)\frac{dt}{t} \;,\;\;y\in \mathbb{R},\;x\in(0,+\infty),\end{eqnarray*} so we
 can write for $x\in(0,+\infty)$ and $y\in \mathbb{R}$,\\ $(R_{k}(x,f_{\varepsilon,\delta})+R_{k}(-x,f_{\varepsilon,\delta}))(y) =
 \displaystyle \int_\varepsilon^\delta
 \big[\big(\tau_x(\phi_t)+\tau_{-x}(\phi_t)-2 \sum_{i=0}^{[\frac{k-1}{2}]} b_{2i}(x)
 \Lambda_{\alpha}^{2i}\phi_t\big) \ast_\alpha \phi_t \ast_\alpha f\big](y)\frac{dt}{t}\,.$
 Using the Minkowski's inequality for integrals and (2.6), we get\\
$\|(R_{k}(x,f_{\varepsilon,\delta})+R_{k}(-x,f_{\varepsilon,\delta}))\|_{p,\alpha}$
 \begin{eqnarray}
 &\leq&\int_\varepsilon^\delta
 \|(\tau_x(\phi_t)+\tau_{-x}(\phi_t)-2 \displaystyle\sum_{i=0}^{[\frac{k-1}{2}]} b_{2i}(x)\Lambda_{\alpha}^{2i}\phi_t)
  \ast_\alpha \phi_t \ast_\alpha f\|_{p,\alpha}\frac{dt}{t}\nonumber \\&\leq& c\int_\varepsilon^\delta
 \| (\tau_x(\phi_t)+\tau_{-x}(\phi_t)-2 \displaystyle\sum_{i=0}^{[\frac{k-1}{2}]} b_{2i}(x)\Lambda_{\alpha}^{2i}\phi_t) \|_{1,\alpha}
 \|\phi_t \ast_\alpha f\|_{p,\alpha}\frac{dt}{t}\nonumber\\
 &=& c\int_\varepsilon^\delta
\|R_{k}(x,\phi_{t})+R_{k}(-x,\phi_{t})\|_{1,\alpha}
 \|\phi_t \ast_\alpha f\|_{p,\alpha}\frac{dt}{t}\;.\end{eqnarray}
 For $x,\;t\in (0,+\infty)$, we have \\$ \|R_{k}(x,\phi_{t})+R_{k}(-x,\phi_{t})\|_{1,\alpha}$
 \begin{eqnarray} &=&\| \tau_x(\phi_t)+\tau_{-x}(\phi_t)-2 \displaystyle \sum_{i=0}^{[\frac{k-1}{2}]} b_{2i}(x)\Lambda_{\alpha}^{2i}\phi_t
 \|_{1,\alpha}\nonumber\\
 &=&\int_{\mathbb{R}}\Big|\Big(\int_{\mathbb{R}}\phi_t(z)(d\gamma_{x,y}(z)+d\gamma_{-x,y}(z))\Big)-2 \sum_{i=0}^{[\frac{k-1}{2}]}
 b_{2i}(x)\Lambda_{\alpha}^{2i}\phi_t(y)\Big|d\mu_\alpha(y)\nonumber\\
 &=&\int_{\mathbb{R}}\Big|\Big(\int_{\mathbb{R}}\phi\big( \frac{z}{t}\big)(d\gamma_{x,y}(z)+d\gamma_{-x,y}(z))\Big)-2\sum_{i=0}^{[\frac{k-1}{2}]} b_{2i}\big(\frac{x}{t}\big)\Lambda_{\alpha}^{2i}\phi\big(
 \frac{y}{t}\big)\Big|t^{-2(\alpha+1)}d\mu_\alpha(y)\;.\end{eqnarray}
 By (2.2) and the change of variable $z'=\frac{z}{t}$, we have
 $\displaystyle W_\alpha(x,y,z't)\;t^{2(\alpha+1)}=W_\alpha( \frac{x}{t}, \frac{y}{t},z'),$
 which implies that
  $\displaystyle d\gamma_{x,y}(z)=d\gamma_{\frac{x}{t},\frac{y}{t}}(z')\,.$
  Hence from (4.12), we obtain\\  $ \|R_{k}(x,\phi_{t})+R_{k}(-x,\phi_{t})\|_{1,\alpha}$
   \begin{eqnarray}
   &=&\int_{\mathbb{R}}\Big|\Big(\int_{\mathbb{R}}\phi( z')(d\gamma_{\frac{x}{t},\frac{y}{t}}(z')
  +d\gamma_{\frac{-x}{t},\frac{y}{t}}(z'))\Big)-2\sum_{i=0}^{[\frac{k-1}{2}]} b_{2i}\big(\frac{x}{t}\big)\Lambda_{\alpha}^{2i}\phi \big(\frac{y}{t}\big)\big|t^{-2(\alpha+1)}d\mu_\alpha(y)
 \nonumber\\ &=&\int_\mathbb{R}\Big|\Big(\tau_\frac{x}{t}(\phi)(\frac{y}{t})+\tau_\frac{-x}{t}(\phi)(\frac{y}{t})\Big)t^{-2(\alpha+1)}
  -2\big(\sum_{i=0}^{[\frac{k-1}{2}]} b_{2i}\big(\frac{x}{t}\big)\Lambda_{\alpha}^{2i}\phi \big)_t(y) \Big|d\mu_\alpha(y)\nonumber\\
  &=& \Big\|\Big(\tau_\frac{x}{t}(\phi) +\tau_\frac{-x}{t}(\phi)
     -2\sum_{i=0}^{[\frac{k-1}{2}]}
     b_{2i}\big(\frac{x}{t}\big)\Lambda_{\alpha}^{2i}\phi \Big)_{t}
   \Big\|_{1,\alpha}\nonumber \\&=& \Big\| \tau_\frac{x}{t}(\phi) +\tau_\frac{-x}{t}(\phi)
     -2 \sum_{i=0}^{[\frac{k-1}{2}]} b_{2i}\big(\frac{x}{t}\big)\Lambda_{\alpha}^{2i}\phi
   \Big\|_{1,\alpha}\nonumber\\
   &=& \Big\|
R_{k}(\frac{x}{t},\phi)+R_{k}(\frac{-x}{t},\phi)
 \Big\|_{1,\alpha} . \end{eqnarray}
  Since $\phi \in
\mathcal{S}_\ast(\mathbb{R})$, then using (2.10) and (3.3), we can
assert that
$$\Big\| R_{k}(\frac{x}{t},\phi)+R_{k}(\frac{-x}{t},\phi)
 \Big\|_{1,\alpha} \leq
c\;\big(\frac{x}{t}\big)^{k}\|\Lambda_{\alpha}^{k}\phi\|_{1,\alpha}\leq
c\;\big(\frac{x}{t}\big)^{k}\;,\;\; \quad$$
    on the other hand, by (3.4) we have $$\Big\|
R_{k}(\frac{x}{t},\phi)+R_{k}(\frac{-x}{t},\phi)
 \Big\|_{1,\alpha} \leq c\;\big(\frac{x}{t}\big)^{k-1}\|\Lambda_{\alpha}^{k-1}\phi\|_{1,\alpha}\leq
c\;\big(\frac{x}{t}\big)^{k-1}, $$
     then we get,\begin{eqnarray}\Big\| R_{k}(\frac{x}{t},\phi)+R_{k}(\frac{-x}{t},\phi) \Big\|_{1,\alpha}\leq c\;\min \Big\{\big(\frac{x}{t}\big)^{k-1},\big(\frac{x}{t}\big)^{k}\Big\}.\qquad\quad \;\end{eqnarray}
     From (4.11), (4.13) and (4.14), we obtain
 \begin{eqnarray} \|(R_{k}(x,f_{\varepsilon,\delta})+R_{k}(-x,f_{\varepsilon,\delta}))\|_{p,\alpha}
  \leq c \int_\varepsilon^{\delta}
\min\Big\{\big(\frac{x}{t}\big)^{k-1},\big(\frac{x}{t}\big)^{k}\Big\}\|\phi_t
\ast_\alpha f\|_{p,\alpha} \frac{dt}{t}\;.\end{eqnarray}
 Note that $\Lambda_{\alpha}^{2i}\phi\ast_\alpha\phi\in\mathcal{S_*}( \mathbb{R}).$ By
 (2.1) and
 (2.7), we have
\begin{eqnarray*}\int_\mathbb{R}(\Lambda_{\alpha}^{2i}\phi\ast_\alpha
\phi)(x)|x|^{2\alpha+1}dx&=&2^{\alpha+1}
\Gamma(\alpha+1)\mathcal{F}_\alpha(\Lambda_{\alpha}^{2i}\phi\ast_\alpha
\phi)(0)\\&=&2^{\alpha+1}
\Gamma(\alpha+1)\mathcal{F}_\alpha(\Lambda_{\alpha}^{2i}\phi)(0)\mathcal{F}_\alpha(\phi)(0)\\&=&2^{\alpha+1}
\Gamma(\alpha+1) \mathcal{F}_\alpha(\Lambda_{\alpha}^{2i}\phi)(0)
\int_\mathbb{R} \phi(z)d\mu_\alpha(z)=0.
\end{eqnarray*} Since $\Lambda_{\alpha}^{2i}\phi\ast_\alpha\phi$ is in the Schwartz space $\mathcal{S}( \mathbb{R})$, we have $$\int_\mathbb{R}|log|x||\;|\Lambda_{\alpha}^{2i}\phi\ast_\alpha\phi(x)|\;|x|^{2\alpha+1}dx<+\infty.$$
 Then, by Calder\'on's reproducing formula related to the Dunkl operator (see \cite{mou.T}, Theorem 3), we have
 $$\lim_{\varepsilon\rightarrow0,\;\delta\rightarrow +\infty}
 \Lambda_{\alpha}^{2i}f_{\varepsilon,\delta} = c \;\Lambda_{\alpha}^{2i}f\;,\;\;\;in\;L^p(\mu_\alpha)\;,$$
 hence from (4.15), we deduce our result.
 \end{proof}
\begin{thm} Let $0 <\beta <1$, $ k=1,2,...,$ $ 1 < p < +\infty $ and $ 1 \leq q \leq +\infty$, then we have
 \begin{eqnarray*}
 \widetilde{\mathcal{B}}^k\mathcal{D}^{p,q}_{\beta,\alpha}=
   \mathcal{C}_{p,q}^{k,\beta,\alpha},\end{eqnarray*} and for $p=1$,
   we have only $\widetilde{\mathcal{B}}^k\mathcal{D}^{1,q}_{\beta,\alpha}\subset
   \mathcal{C}_{1,q}^{k,\beta,\alpha}.$
\end{thm}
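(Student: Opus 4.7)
The strategy is to combine the two-sided pointwise-in-parameter bounds established in Lemmas 4.1 and 4.2 with Minkowski's integral inequality and a change of variable in the Mellin-type integrals that define the two Besov-Dunkl norms. First I would use Remark 4.1(2) to rewrite $\widetilde{\omega}_{p,\alpha}^k(x,f)=\|R_k(x,f)+R_k(-x,f)\|_{p,\alpha}$, so that both lemmas become statements directly about this quantity.

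For the inclusion $\widetilde{\mathcal{B}}^k\mathcal{D}_{p,q}^{\beta,\alpha}\subset \mathcal{C}_{p,q}^{k,\beta,\alpha}$ (valid for all $1\leq p<+\infty$), I would invoke Lemma 4.1 with the free parameter $r$ chosen so that $r>\beta+k-1$. Applying Minkowski's integral inequality in the weighted space $L^q(t^{-q(\beta+k-1)}\,dt/t)$ and then substituting $u=x/t$ decouples $t$ from the remainder norms, leaving a kernel factor $\int_0^{+\infty}\min\{u^{2(\alpha+1)},u^{-r}\}\,u^{\beta+k-1}\,du/u$ multiplying the $\widetilde{\mathcal{B}}^k$ seminorm of $f$. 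This kernel integral is finite: near zero one needs $2\alpha+1+\beta+k>0$, which holds since $\alpha>-1/2$, and near infinity one needs $r>\beta+k-1$, secured by our choice of $r$. The case $q=+\infty$ reduces to the same pattern with a supremum in place of the $L^q$ integration.

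For the reverse inclusion $\mathcal{C}_{p,q}^{k,\beta,\alpha}\subset \widetilde{\mathcal{B}}^k\mathcal{D}_{p,q}^{\beta,\alpha}$, available only for $1<p<+\infty$, I would apply Lemma 4.2 together with the analogous Minkowski plus substitution $v=x/t$. The resulting kernel integral $\int_0^{+\infty}\min\{v^{k-1},v^k\}\,v^{-(\beta+k-1)}\,dv/v$ converges exactly because $0<\beta<1$: near zero the integrand reduces to $v^{-\beta}\,dv/v$ (integrable since $\beta<1$), and near infinity to $v^{-\beta-1}\,dv/v$ (integrable since $\beta>0$). Thus the two exponent constraints in the definition of a Besov space appear naturally as the summability conditions for the Mellin kernel.

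The main obstacle I anticipate is not the Mellin-convolution estimates themselves, which are routine once the two lemmas are in hand, but the bookkeeping around the differing integrability hypotheses in the two spaces, namely $\Lambda_\alpha^{2i}f\in L^p(\mu_\alpha)$ for $0\leq i\leq[\frac{k-1}{2}]$ in $\mathcal{C}_{p,q}^{k,\beta,\alpha}$ versus only $\Lambda_\alpha^{k-1}f\in L^p(\mu_\alpha)$ in $\widetilde{\mathcal{B}}^k\mathcal{D}_{p,q}^{\beta,\alpha}$, so that each inclusion is stated on a well-defined domain. The endpoint restriction $p=1$ forces the loss of one direction precisely because Lemma 4.2 is stated only for $1<p<+\infty$: its proof relies on a Calder\'on-type reproducing identity whose Dunkl-convolution bounds degenerate at $p=1$, so no analogous starting estimate is available there.
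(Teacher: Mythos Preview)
Your strategy coincides with the paper's: both directions rest on Lemmas~4.1 and~4.2 and then amount to bounding a Mellin-type convolution operator on $L^q((0,\infty),dt/t)$. The execution differs slightly. The paper does not argue via Minkowski's integral inequality uniformly in~$q$; it splits into the three cases $q=1$, $1<q<\infty$, $q=\infty$, using Fubini for $q=1$, a direct supremum bound for $q=\infty$, and for the middle range a H\"older/Schur-test argument in which the kernel $L(x,t)$ is split as $L^{1/q'}\cdot L^{1/q}$. Your Minkowski (equivalently, Young's inequality for convolution on the multiplicative group $(\mathbb{R}_+,dt/t)$) route is a legitimate and somewhat more streamlined alternative that avoids the case split, at the cost of being less explicit about constants. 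One minor slip: in your second kernel computation the near-zero and near-infinity contributions should read $v^{-\beta}\,dv$ and $v^{-\beta-1}\,dv$ rather than $dv/v$; your integrability conclusions $\beta<1$ and $\beta>0$ are nonetheless the right ones. Finally, your observation about the mismatch between the a~priori hypotheses $\Lambda_\alpha^{k-1}f\in L^p(\mu_\alpha)$ in $\widetilde{\mathcal{B}}^k\mathcal{D}_{p,q}^{\beta,\alpha}$ versus $\Lambda_\alpha^{2i}f\in L^p(\mu_\alpha)$, $0\le i\le[\frac{k-1}{2}]$, in $\mathcal{C}_{p,q}^{k,\beta,\alpha}$ is apt; the paper's proof treats only the seminorm inequalities and does not address this domain bookkeeping explicitly.
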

\begin{proof} Assume $f\in
\widetilde{\mathcal{B}}^k\mathcal{D}^{p,q}_{\beta,\alpha}$ for
$ 1 \leq p < +\infty $, $ 1 \leq q \leq +\infty$ and $r>\beta+k-1$.\\

$\bullet$ Case $q=1$. By (4.5) and Fubini's theorem, we have
\begin{eqnarray*}\displaystyle{\int_0^{+\infty}\frac{\|f \ast_\alpha \phi_t
\|_{p,\alpha}}{t^{\beta+k-1}}\frac{dt}{t}}
  &\leq& c \int_0^{+\infty}\int_0^{+\infty}\min\Big\{\Big(\frac{x}{t}\Big)^{2(\alpha+1)},
\Big(\frac{t}{x}\Big)^{r}\Big\}\widetilde{\omega}_{p,\alpha}^k(x,f)t^{-\beta-k}dt\frac{dx}{x}\\
 &\leq& c \int_0^{+\infty}\widetilde{\omega}_{p,\alpha}^k(x,f)\Big(\int_0^{+\infty}
 \min\Big\{\Big(\frac{x}{t}\Big)^{2(\alpha+1)},
\Big(\frac{t}{x}\Big)^{r}\Big\}t^{-\beta-k}dt\Big)\frac{dx}{x}\\
 &\leq& c \int_0^{+\infty}\widetilde{\omega}_{p,\alpha}^k(x,f)\Big(x^{-r}\int_0^xt^{r-\beta-k}dt+x^{2(\alpha+1)}
 \int_x^{+\infty}t^{-\beta-k-2\alpha-2}dt\Big)\frac{dx}{x}\\
  &\leq&c \int_0^{+\infty}\frac{\widetilde{\omega}_{p,\alpha}^k(x,f)}{x^{\beta+k-1}}\frac{dx}{x}<+\infty,\end{eqnarray*}hence
 $f\in \mathcal{C}_{p,1}^{k,\beta,\alpha}$.

$\bullet$ Case $q=+\infty$. By (4.5), we have
\begin{eqnarray*} \|\phi_t \ast_\alpha f\|_{p,\alpha} & \leq & c\;
\Big(\int_0^{t}\Big(\frac{x}{t}\Big)^{2(\alpha+1)}\widetilde{\omega}_{p,\alpha}^k(x,f)\frac{dx}{x}
+\int_t^{+\infty}\Big(\frac{t}{x}\Big)^{r}\widetilde{\omega}_{p,\alpha}^k(x,f)\frac{dx}{x}\Big)
\\&\leq& c
\sup_{x\in(0,+\infty)}\frac{\widetilde{\omega}_{p,\alpha}^k(x,f)}{x^{\beta+k-1}}
\Big(t^{-2(\alpha+1)}\int_0^tx^{2\alpha+\beta+k}dx
+t^r\int_t^{+\infty}x^{\beta+k-r-2}dx\Big)\\&\leq&
c\;t^{\beta+k-1}\sup_{x\in(0,+\infty)}\frac{\widetilde{\omega}_{p,\alpha}^k(x,f)}{x^{\beta+k-1}},\end{eqnarray*}
then we deduce that $f\in\mathcal{C}_{p,\infty}^{k,\beta,\alpha}$.

$\bullet$ Case $1<q<+\infty$. By (4.5) again, we have for $t>0$
$$\frac{\|\phi_t \ast_\alpha f\|_{p,\alpha}}{t^{\beta+k-1}}  \leq c
\int_0^{+\infty}\Big(\frac{x}{t}\Big)^{\beta+k-1}
\min\Big\{\Big(\frac{x}{t}\Big)^{2(\alpha+1)},
\Big(\frac{t}{x}\Big)^{r}\Big\}\frac{\widetilde{\omega}_{p,\alpha}^k(x,f)}{x^{\beta+k-1}}\frac{dx}{x}
\;.$$ Put $\displaystyle{L(x,t)=\Big(\frac{x}{t}\Big)^{\beta+k-1}
\min\Big\{\Big(\frac{x}{t}\Big)^{2(\alpha+1)},
\Big(\frac{t}{x}\Big)^{r}\Big\}}$ and
 $\displaystyle{q'=\frac{q}{q-1}}$ the conjugate of $q$. Since $$\displaystyle{\int_0^{+\infty} L(x,t) \frac{dx}{x}}
 =t^{-\beta-k-2\alpha-1}\int_0^t
x^{\beta+k+2\alpha}dx+
 t^{-\beta-k+r+1}\int_t^{+\infty} x^{\beta+k-r-2}dx\leq c,$$ we can write using H\"older's inequality,
\begin{eqnarray*} \frac{\|\phi_t \ast_\alpha
f\|_{p,\alpha}}{t^{\beta+k-1}} &\leq&c
\int_0^{+\infty}(L(x,t))^{\frac{1}{q'}}\Big((L(x,t))^{\frac{1}{q}}\frac{\widetilde{\omega}_{p,\alpha}^k(x,f)}{x^{\beta+k-1}}\Big)\frac{dx}{x}
\\&\leq&
c\;\Big(\int_0^{+\infty}L(x,t)\Big(\frac{\widetilde{\omega}_{p,\alpha}^k(x,f)}{x^{\beta+k-1}}\Big)^q\frac{dx}{x}\Big)^{\frac{1}{q}}.\end{eqnarray*}
By the fact that $$ \int_0^{+\infty}L(x,t)\frac{dt}{t} =
x^{\beta+k-r-1}\int_0^x t^{-\beta-k+r}dt+
 x^{\beta+k+2\alpha+1}\int_x^{+\infty} t^{-\beta-k-2\alpha-2}dt\leq c,$$ we get using Fubini's theorem,
\begin{eqnarray*} \int_0^{+\infty}\Big(\frac{\|\phi_t \ast_\alpha
f\|_{p,\alpha}}{t^{\beta+k-1}}\Big)^q \frac{dt}{t}
&\leq& c \int_0^{+\infty}\Big(\frac{\widetilde{\omega}_{p,\alpha}^k(x,f)}{x^{\beta+k-1}}\Big)^q\Big(\int_0^{+\infty}L(x,t)\frac{dt}{t}\Big)\frac{dx}{x}\\
&\leq& c
\int_0^{+\infty}\Big(\frac{\widetilde{\omega}_{p,\alpha}^k(x,f)}{x^{\beta+k-1}}\Big)^q
\frac{dx}{x}< +\infty ,
\end{eqnarray*} which proves the result.\\
Assume now $f\in\mathcal{C}_{p,q}^{k,\beta,\alpha}$ for $1<
p<+\infty$ and $1\leq q\leq+\infty.$

$\bullet$ Case $q=1$. By (4.10) and Fubini's theorem, we have
\begin{eqnarray*} \int_0^{+\infty}\frac{\widetilde{\omega}_p^\alpha(f)(x)}{x^{\beta+k-1}}\frac{dx}{x}
 &\leq & c \int_0^{+\infty}\int_0^{+\infty}
\min\Big\{\big(\frac{x}{t}\big)^{k-1},\big(\frac{x}{t}\big)^{k}\Big\}\|\phi_t
\ast_\alpha f\|_{p,\alpha}x^{-\beta-k} \frac{dt}{t}dx\\&\leq&
c\int_0^{+\infty}\|\phi_t \ast_\alpha
f\|_{p,\alpha}\Big(\int_0^{+\infty}
\min\Big\{\big(\frac{x}{t}\big)^{k-1},\big(\frac{x}{t}\big)^{k}\Big\}x^{-\beta-k}dx\Big)
\frac{dt}{t}\\&\leq& c\int_0^{+\infty}\|\phi_t \ast_\alpha
f\|_{p,\alpha}\Big(\frac{1}{t^k}\int_0^t
x^{-\beta}dx+\frac{1}{t^{k-1}}\int_t^{+\infty}
x^{-\beta-1}dx\Big)\frac{dt}{t}\\&\leq& c
\int_0^{+\infty}\frac{\|\phi_t \ast_\alpha
f\|_{p,\alpha}}{t^{\beta+k-1}}\frac{dt}{t}<+\infty,\end{eqnarray*}
then we obtain the result.

$\bullet$ Case $q=+\infty$. By (4.10), we get
\begin{eqnarray*} \widetilde{\omega}_p^\alpha(f)(x) &\leq& c\;
\Big(\int_0^{x}\big(\frac{x}{t}\big)^{k-1}\|\phi_t \ast_\alpha
f\|_{p,\alpha}\frac{dt}{t}+
 \int_x^{+\infty} \big(\frac{x}{t}\big)^{k} \|\phi_t \ast_\alpha f\|_{p,\alpha}
\frac{dt}{t}\Big)\\&\leq& c \sup_{t\in(0,+\infty)}\frac{\|\phi_t
\ast_\alpha f\|_{p,\alpha}}{t^{\beta+k-1}}
\Big(x^{k-1}\int_0^{x}t^{\beta-1}dt + x^k
\int_x^{+\infty}t^{\beta-2} dt \Big)\\&
 \leq& c \;x^{\beta+k-1} \sup_{t\in(0,+\infty)}\frac{\|\phi_t \ast_\alpha f\|_{p,\alpha}}
{t^{\beta+k-1}},\end{eqnarray*} so, we deduce that
$f\in\widetilde{\mathcal{B}}^k\mathcal{D}^{p,+\infty}_{\beta,\alpha}$.

$\bullet$ Case $1<q<+\infty$. By (4.10) again, we have for $x>0$
$$  \frac{\widetilde{\omega}_p^\alpha(f)(x)}{x^{\beta+k-1}} \leq c
\int_0^{+\infty}\Big(\frac{t}{x}\Big)^{\beta+k-1}
\min\Big\{\big(\frac{x}{t}\big)^{k-1},\big(\frac{x}{t}\big)^{k}\Big\}\frac{
\|\phi_t \ast_\alpha f\|_{p,\alpha}}{t^{\beta+k-1}}
\frac{dt}{t}\;.$$ Note that $$\Big(\frac{t}{x}\Big)^{\beta+k-1}
\min\Big\{\big(\frac{x}{t}\big)^{k-1},\big(\frac{x}{t}\big)^{k}\Big\}=\Big(\frac{t}{x}\Big)^\beta
\min\Big\{1,\frac{x}{t}\Big\}.$$ Put
$\displaystyle{G(x,t)=\Big(\frac{t}{x}\Big)^\beta
\min\Big\{1,\frac{x}{t}\Big\}}.$ Since
$$\displaystyle{\int_0^{+\infty} G(x,t) \frac{dt}{t}}
 =x^{-\beta}\int_0^x
t^{\beta-1}dt+
 x^{-\beta+1}\int_x^{+\infty} t^{\beta-2}dt\leq c,$$ then using H\"older's inequality, we can write
  \begin{eqnarray*}\frac{\widetilde{\omega}_p^\alpha(f)(x)}{x^{\beta+k-1}}
  &\leq& c \int_0^{+\infty}(G(x,t))^{\frac{1}{q'}}\Big((G(x,t))^{\frac{1}{q}}
\frac{ \|\phi_t \ast_\alpha f\|_{p,\alpha}}{t^{\beta+k-1}}\Big)
\frac{dt}{t}\\&\leq& c \;\Big(\int_0^{+\infty}  G(x,t) \Big( \frac{
\|\phi_t \ast_\alpha f\|_{p,\alpha}}{t^{\beta+k-1}}\Big)^q
\frac{dt}{t}\Big)^{\frac{1}{q}}.\end{eqnarray*} By the fact that
$$\int_0^{+\infty}G(x,t)\frac{dx}{x} =
t^{\beta-1}\int_0^t x^{-\beta}dx+
 t^{\beta}\int_t^{+\infty} x^{-\beta-1}dx
 \leq c,$$  we
get using Fubini's theorem,
\begin{eqnarray*}
\int_0^{+\infty}\Big(\frac{\widetilde{\omega}_p^\alpha(f)(x)}{x^{\beta+k-1}}\Big)^q
\frac{dx}{x}&\leq& c\int_0^{+\infty}\Big(\frac{ \|\phi_t \ast_\alpha
f\|_{p,\alpha}}{t^{\beta+k-1}}\Big)^q\Big(\int_0^{+\infty} G(x,t)
\frac{dx}{x}\Big)\frac{dt}{t}\\&\leq& c\int_0^{+\infty}\Big(\frac{
\|\phi_t \ast_\alpha f\|_{p,\alpha}}{t^{\beta+k-1}}\Big)^q
\frac{dt}{t} <+\infty,
\end{eqnarray*} thus the result is established.
\end{proof}

\end{document}